\newcommand{\tree}[1]{\mathrm{Tree}(#1)}
\newcommand{\utree}[1]{\mathrm{UTree}(#1)}
\newcommand{\downward}[2]{\mathcal{D}_{#1}(#2)}
\newcommand{\downwardi}[1]{\downward{i}{#1}}
\newcommand{\dir}[1]{\mathrm{dir}(#1)}
\newcommand{\dd}[1]{\mathrm{dd}(#1)}
\newcommand{\sgn}[1]{\mathrm{sgn}(#1)}
\newcommand{\islide}[2][i]{\ensuremath{\mathcal{E}_{#1}(#2)}}
\newcommand{\symd}{\ominus}
\newcommand{\Fp}[1]{\ensuremath{F_{#1}^+}}
\newcommand{\Fm}[1]{\ensuremath{F_{#1}^-}}
\newcommand{\eslide}[1]{\ensuremath{\mathcal{E}(Q_{#1})}}
\newcommand{\eslidesig}[1]{\ensuremath{\mathcal{E}(#1)}}
\newcommand{\stacking}[2]{\genfrac{}{}{0pt}{}{#1}{#2}}
\newcommand{\power}[1]{\mathcal{P}(#1)}
\newcommand{\powernon}[1]{\mathcal{P}_{\geq 1}^{#1}}
\newcommand{\ptwo}[1]{\mathcal{P}_{\geq 2}^{#1}}
\newcommand{\eps}{\varepsilon}
\def\co{\colon\thinspace}
\newtheorem{theorem}{Theorem}
\newtheorem{lemma}[theorem]{Lemma}
\newtheorem{corollary}[theorem]{Corollary}
\theoremstyle{remark}
\newtheorem{remark}[theorem]{Remark}
\newtheorem*{example}{Example}
\begin{document}

\title{Counting the spanning trees of the $3$-cube using edge slides}
\author{Christopher Tuffley}
\address{Institute of Fundamental Sciences, Massey University,
         Private Bag 11 222, Palmerston North 4442, New Zealand}
\email{c.tuffley@massey.ac.nz}

\subjclass[2010]{05C30 (05C05)}

\begin{abstract}
We give a direct combinatorial proof of the known fact that the 3-cube
has 384 spanning trees, using an ``edge slide'' operation on spanning
trees. This gives an answer in the case $n=3$ to a question implicitly
raised by Stanley. Our argument also gives a bijective proof of the
$n=3$ case of a weighted count of the spanning trees of the $n$-cube
due to Martin and Reiner.
\end{abstract}

\maketitle
\renewcommand{\thefootnote}{}
\footnotetext{\emph{Journal reference:} \emph{Australas.\ J.\ Combin.}, 54:189--206, 2012.}

\section{Introduction}
 
The $n$-cube is the graph $Q_n$ whose vertices are the
subsets of the set $[n]=\{1,2,\ldots,n\}$, with an edge between $S$ and
$R$ if they differ by the addition or removal of precisely one element.
The $3$-cube is then the familiar graph shown in Figure~\ref{qthree.fig},
whose edges and vertices form the edges and vertices of an ordinary 
cube or die.

The number of spanning trees of the $n$-cube is known via Kirchhoff's
Matrix-Tree Theorem (see for example Stanley~\cite{stanley-II})
to be
\begin{equation}
|\tree{Q_n}| = 2^{2^n-n-1} \prod_{k=1}^n k^{\binom{n}{k}} 
             = \prod_{\stacking{S\subseteq [n]}{|S|\geq 2}} 2|S|.
\label{noftrees.eq}
\end{equation}
However, according to Stanley~\cite[p.~62]{stanley-II} a direct
combinatorial proof of this formula is not known. In contrast, the
spanning trees of the complete graph $K_n$ may be counted not only via
the Matrix-Tree Theorem, but also bijectively, using the Pr\"ufer
Code~\cite{prufer18}. Indeed, there are several known proofs that
$K_n$ has $n^{n-2}$ spanning trees --- see for example
Moon~\cite{moon67}.

The purpose of this note is to give a direct combinatorial proof that
the $3$-cube has $2^4\cdot2^3\cdot3=384$ spanning trees, and thereby
answer in the case $n=3$ the question implicitly raised by Stanley's
comment.  
We will do this by defining and using ``edge slide'' moves
on the spanning trees of $Q_3$ to break the set of trees into
families that are readily counted. Our methods do not readily extend
to $n\geq 4$, but 
since this paper was written, Stanley's question has been answered in
full using different methods by Bernardi~\cite{bernardi2012}.

Our approach is motivated by the following refinement of~\eqref{noftrees.eq},
due to Martin and Reiner~\cite[Thm.~3]{martin-reiner}. Again using the 
Matrix-Tree Theorem, they prove a weighted count of the spanning trees
of $Q_n$ in terms of variables $q_1,\ldots,q_n$ and $x_1,\ldots,x_n$,
namely
\begin{equation}
\label{weightedcount.eq}
\sum_{T\in\tree{Q_n}} q^{\dir{T}}x^{\dd{T}}
    =q_1\cdots q_n \prod_{\stacking{S\subseteq [n]}{|S|\geq 2}}
                \sum_{i\in S} q_i (x_i^{-1}+x_i)
\end{equation}
(see Section~\ref{dddm.sec} for details).
Each term on the right-hand side
corresponds to a tree, and is obtained by choosing, for each
subset $S$ of $[n]$ of size at least $2$, an element $i$ of $S$ and a
sign $\pm 1$. 
We call such a series of choices a \emph{signed section} of
$\ptwo{n}=\{S\in\power{[n]}:|S|\geq2\}$. Our combinatorial proof
of~\eqref{noftrees.eq} for $n=3$ may be used to construct a
weight preserving bijection between the spanning trees of $Q_3$
and the signed sections of $\ptwo{3}$, giving a combinatorial
proof of the $n=3$ case of~\eqref{weightedcount.eq}.

\begin{figure}
\begin{center}
\psfrag{empty}{$\emptyset$}
\psfrag{1}{$\{1\}$}
\psfrag{2}{$\{2\}$}
\psfrag{3}{$\{3\}$}
\psfrag{12}{$\{1,2\}$}
\psfrag{23}{$\{2,3\}$}
\psfrag{13}{$\{1,3\}$}
\psfrag{123}{$\{1,2,3\}$}
\includegraphics[scale=1]{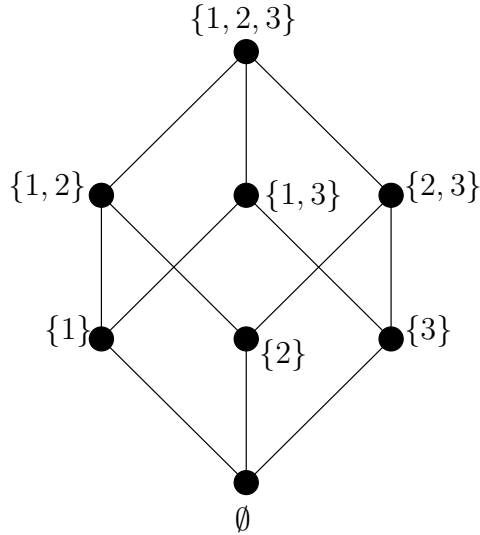}
\caption{The three-cube.}
\label{qthree.fig}
\end{center}
\end{figure}

\section{Martin and Reiner's weighted count}

Before proceeding we describe the weights used in Martin and Reiner's
formula~\eqref{weightedcount.eq}. 
The weight of a tree consists of two factors, the 
\emph{direction monomial} $q^{\dir{T}}$ and the 
\emph{decoupled degree monomial} $x^{\dd{T}}$, defined below
in Section~\ref{dddm.sec}.
In Section~\ref{ddalt.sec} we prove an
alternate formulation of the decoupled degree monomial,
which we will use in what follows.

\subsection{The direction and decoupled degree monomials}
\label{dddm.sec}

Recall that we regard $Q_n$ as the graph with vertex set the power set 
of $[n]$, with an edge between subsets $S$ and $R$
if they differ by the addition or deletion of a single element.
If $S$ and $R$ differ by the addition or deletion of $i\in[n]$
we say that the edge $e=(S,R)$ is \emph{in the direction $i$},
and write $\dir{e}=i$. 
As usual, for a graph $G$ we write $E(G)$ for the edge set of $G$.
With this notation, the direction monomial
of a spanning tree $T$ of $Q_n$ is
\[
q^{\dir{T}} = \prod_{e\in E(T)} q_{\dir{e}}.
\]
To define the decoupled degree monomial, given $S\in[n]$
let $x_S = \prod_{i\in S} x_i$. Then
\begin{align}
x^{\dd{T}} &= \prod_{S\subseteq[n]}
  \left( \frac{x_S}{x_{[n]\setminus S}}\right)^{\frac{1}{2}\deg_T (S)} 
  \nonumber \\
           &= \prod_{(S,R)\in E(T)}  \frac{x_S x_R}{x_{[n]}}.
               \label{dd1.eq}
\end{align}

\subsection{An alternate formulation of the decoupled degree monomial}
\label{ddalt.sec}

Note that if the edge $(S,R)$ of $T$ is in the direction $i$
then
\begin{equation}
\frac{x_S x_R}{x_{[n]}} = x_1^{\epsilon_1}\cdots x_{i-1}^{\epsilon_{i-1}}
                         x_{i+1}^{\epsilon_{i+1}}\cdots x_n^{\epsilon_n},
\label{exhibitA}
\end{equation}
where for $j\not=i$,
\begin{equation}
\epsilon_j = \begin{cases}
             +1 & j\in S, \\
             -1 & j\not\in S.
             \end{cases}
\label{exhibitB}
\end{equation}
Thus each edge of $T$ in direction $i$ contributes a factor of 
$x_j$ or $x_j^{-1}$ to $x^{\dd{T}}$ for each $j$ \emph{not} equal to
$i$. The goal of this section is to show that the decoupled degree monomial
can be re-expressed as a product over the edges $e$ of $T$ of
$x_{\dir{e}}^{\pm 1}$, where the signs are determined by canonically
orienting the edges of $T$.

\begin{figure}
\begin{center}
\psfrag{empty}{$\emptyset$}
\psfrag{1}{$\{1\}$}
\psfrag{2}{$\{2\}$}
\psfrag{3}{$\{3\}$}
\psfrag{12}{$\{1,2\}$}
\psfrag{23}{$\{2,3\}$}
\psfrag{13}{$\{1,3\}$}
\psfrag{123}{$\{1,2,3\}$}
\psfrag{(a)}{(a)}
\psfrag{(b)}{(b)}
\psfrag{+1}{$+1$}
\psfrag{-1}{$-1$}
\includegraphics[scale=0.9]{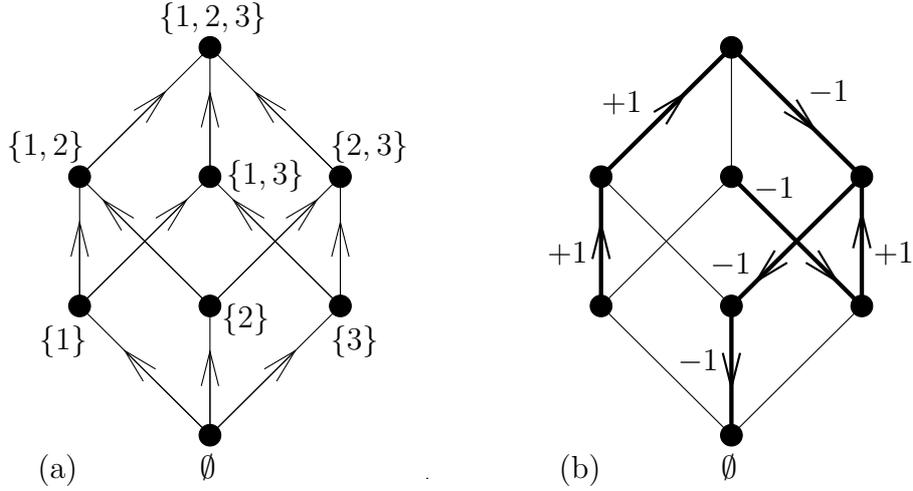}
\caption{Calculating the signs $\mu(e)$ of the edges of a spanning
  tree. (a) The oriented edges of $Q_3$. Each edge is oriented in the
  direction of increasing cardinality. (b) The oriented edges of a
  spanning tree $T$ and the resulting signs; the vertex labels have
  been omitted for clarity. Each edge of $T$ is oriented towards the
  root at $\emptyset$, and the sign is $+1$ where this orientation
  agrees with the orientation in (a), and $-1$ where it disagrees. The
  tree has weight $q_1^2q_2^3q_3^2x_1^{-1}x_2^2x_3$.}
\label{signs.fig}
\end{center}
\end{figure}

To determine the signs orient all edges of $Q_n$ ``upwards'', i.e.\ in the
direction of increasing cardinality. Root each spanning tree $T$ at 
the empty set, and orient each edge of $T$ towards the root. Each edge $e$
of $T$ now has two orientations, one from the cube and one from the tree, and
we let $\mu(e) = +1$ if the two orientations agree, and 
$\mu(e)=-1$ if the orientations disagree (see Figure~\ref{signs.fig}
for an example).
So if $\mu(e)$ is $+1$ then to get to the root $e$ must be crossed 
in the upwards direction relative to the cube,
and if $\mu(e)$ is $-1$ then $e$ must be crossed in the downwards direction
to get to the root.
With these signs we have
\begin{lemma}
\label{ddalt.lem}
\begin{equation}
x^{\dd{T}} = x_1 x_2 \cdots x_n \prod_{e\in E(T)} x_{\dir{e}}^{\mu(e)}.
\label{dd2.eq}
\end{equation}
\end{lemma}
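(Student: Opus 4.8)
The plan is to establish \eqref{dd2.eq} by rewriting the product \eqref{dd1.eq} one edge at a time, exploiting the rooting of $T$ at $\emptyset$. Root $T$ at $\emptyset$, and for each of the $2^n-1$ non-root vertices $v$ let $p(v)$ denote its parent, i.e.\ its neighbour in $T$ on the path to $\emptyset$; then $v\mapsto e_v:=(v,p(v))$ is a bijection from the non-root vertices of $Q_n$ onto $E(T)$. By the definition of $\mu$, the edge $e_v$ has $\mu(e_v)=+1$ exactly when moving from $v$ to $p(v)$ increases cardinality, that is, when $p(v)=v\cup\{\dir{e_v}\}$; so
\[
\mu(e_v)=+1 \iff \dir{e_v}\notin v,\qquad \mu(e_v)=-1 \iff \dir{e_v}\in v.
\]

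The key step is the local identity
\[
\frac{x_v\,x_{p(v)}}{x_{[n]}} \;=\; \frac{x_v^{\,2}}{x_{[n]}}\;x_{\dir{e_v}}^{\,\mu(e_v)}
\]
for every non-root vertex $v$. Writing $i=\dir{e_v}$: if $i\notin v$ then $p(v)=v\cup\{i\}$ and $x_{p(v)}=x_v x_i$, while if $i\in v$ then $p(v)=v\setminus\{i\}$ and $x_{p(v)}=x_v/x_i$; in both cases $x_{p(v)}=x_v\,x_i^{\mu(e_v)}$ by the characterisation of $\mu(e_v)$ above, and multiplying by $x_v/x_{[n]}$ gives the identity. (Equivalently, one can read it off \eqref{exhibitA}--\eqref{exhibitB}: multiplying and dividing the right-hand side there by $x_i$ turns it into $(x_S^2/x_{[n]})\,x_i^{-\epsilon_i}$, and taking $S$ to be the child endpoint $v$ makes $-\epsilon_i=\mu(e_v)$.)

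Substituting this into \eqref{dd1.eq} and using the bijection $v\mapsto e_v$,
\[
x^{\dd{T}} \;=\; \prod_{v\ne\emptyset}\frac{x_v\,x_{p(v)}}{x_{[n]}} \;=\; \Bigl(\prod_{v\ne\emptyset}\frac{x_v^{\,2}}{x_{[n]}}\Bigr)\prod_{e\in E(T)}x_{\dir e}^{\,\mu(e)},
\]
so it remains only to evaluate the scalar factor $\prod_{v\ne\emptyset}x_v^{\,2}/x_{[n]}$. Each $j\in[n]$ lies in exactly $2^{n-1}$ subsets of $[n]$, so $\prod_{v\subseteq[n]}x_v=x_{[n]}^{2^{n-1}}$ and hence $\prod_{v\ne\emptyset}x_v^{\,2}=x_{[n]}^{2^n}$; dividing once by $x_{[n]}$ for each of the $2^n-1$ non-root vertices leaves $x_{[n]}^{2^n}/x_{[n]}^{2^n-1}=x_{[n]}=x_1x_2\cdots x_n$, which is exactly \eqref{dd2.eq}.

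The only delicate point is the local identity: one must keep the relationship between $v$ and its parent and the value of the sign $\mu(e_v)$ both organised around the single question of whether $\dir{e_v}\in v$, and check that the two resulting case analyses are compatible, so that the exponent of $x_{\dir{e_v}}$ comes out as exactly $\mu(e_v)$. Once that is in hand everything else is a direct multiplication together with an elementary count of subsets; and since the rewriting produces the identity edge by edge, there is no need to argue separately about matching the exponents of the individual $x_j$.
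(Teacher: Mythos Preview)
Your proof is correct, and it is genuinely different from the paper's. The paper fixes a direction $i$ and compares the exponent of $x_i$ on both sides: deleting the $k$ edges of $T$ in direction $i$ leaves $j$ components ``upstairs'' and $k+1-j$ ``downstairs'', and an Euler-characteristic count then shows both exponents equal $k+1-2j$. Your argument instead rewrites \eqref{dd1.eq} edge by edge via the parent map, reducing the lemma to the purely algebraic identity $x_{p(v)}=x_v\,x_{\dir{e_v}}^{\mu(e_v)}$ together with the elementary evaluation $\prod_{v\ne\emptyset}x_v^{2}/x_{[n]}=x_{[n]}$. Your route is slicker and avoids any component-counting. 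The paper's route, on the other hand, yields as a byproduct the fact that the number of components upstairs equals the number of downward $i$-edges, which the paper later reuses in the proof of Lemma~\ref{i-slidecount.lem}; if you adopt your argument you would need to supply that observation separately there.
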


\begin{proof}
Delete all edges of $T$ in the direction $i$. If there are $k$ of them, this 
divides $T$ into $k+1$ connected components, and $1\leq j\leq k$ of them 
will be ``upstairs'' (vertices containing $i$), and the remaining
$k+1-j$ will be downstairs. The part of $T$ upstairs has $2^{n-1}$ vertices
and Euler characteristic $j$ (since its zeroth homology has rank $j$,
and all higher homology groups are zero), so there are 
$2^{n-1}-j$ edges of $T$ upstairs, and $2^{n-1}+j-k-1$ downstairs.
By~\eqref{exhibitA} and~\eqref{exhibitB}, the degree of $x_i$ in 
$x^{\dd{T}}$ is the number of edges in $T$ upstairs minus the number of
edges downstairs, or $k+1-2j$. Now each connected component of $T$ 
upstairs must be adjacent to a unique downward edge in direction $i$,
so $j$ of the edges in direction $i$ point down and the remaining $k-j$
point up. Thus the exponent of $x_i$ in~\eqref{dd2.eq} is also
$(k-j)-j+1= k+1-2j$, so~\eqref{dd1.eq} and~\eqref{dd2.eq} agree.
\end{proof}

Note that the factors $x_1,x_2, \ldots, x_n$ in front of the product
in~\eqref{dd2.eq} are necessarily canceled by factors inside the
product, since each spanning tree must have at least one downward edge
in each direction. Thus if there are $k_i$ edges in direction $i$, then
the degree of $x_i$ in $x^{\dd{T}}$ has the opposite parity to $k_i$
and lies between $1-k_i$ and $k_i-1$.

\section{Edge slides for the three-cube}
\label{threecubeslides.sec}

In this section we define and study the edge slide operation on the 
spanning trees of $Q_3$ which we will use to prove our result. 
Goddard and Swart~\cite{goddard-swart1996} define two graphs 
$G_1$ and $G_2$ to be related by an \emph{edge move} if there
are edges $e_1\in E(G_1)$ and $e_2\in E(G_2)$ such that 
$G_2 = G_1-e_1+e_2$. Our operation may be seen as a specialisation
of this to the spanning trees of $Q_3$, in which the edges involved
in the operation are constrained by the structure of $Q_3$.
Note however that our use of the term ``edge slide'' does not agree
with that of Goddard and Swart. 

Given a graph $G$, the graph on the spanning trees of $G$ with an 
edge between $T_1$ and $T_2$ if they are related by an edge move is 
known as the \emph{tree graph} of $G$. We refer the reader to
Ozeki and Yamashita~\cite[Sec.~7.4]{ozeki-yamashita2011} for a 
survey of known results on tree graphs.

\subsection{Definition and existence}
\label{existence3.sec}

For each $i\in\{1,2,3\}$ let $\Fp{i}$ and $\Fm{i}$ be the ``upper''
and ``lower'' faces of $Q_3$ with respect to direction $i$, in other
words, the subgraphs induced by the vertices that respectively do and do not 
contain $i$. There is an obvious automorphism of $Q_3$ induced by a reflection
that exchanges \Fp{i}\ and \Fm{i}, and we will denote this automorphism
by $\sigma_i$. In terms of the symmetric difference $\symd$ this map is given
by
\[
\sigma_i(S) = S\symd\{i\}.
\]

Let $T$ be a spanning tree of $Q_3$, and let $e$ be an edge of $T$
in a direction $j\not=i$ such that $T$ does not also contain $\sigma_i(e)$.
We will say that $e$ is \emph{$i$-slidable} or \emph{slidable in
direction $i$} if deleting $e$ from $T$ and replacing it with $\sigma_i(e)$
yields a second spanning tree $T'$. We may think of this operation as
``sliding'' $e$ across a face of the cube to get a second spanning tree,
as shown in Figure~\ref{3cubeslides.fig}. We will say that an 
edge slide is ``upward'' or ``downward'' according to whether it moves
an edge from $\Fm{i}$ to $\Fp{i}$, or from $\Fp{i}$ to $\Fm{i}$.

Adding any edge of $Q_3$ to $T$ necessarily creates a cycle, so if
$e\in T$ is $i$-slidable the cycle created by adding $\sigma_i(e)$ to
$T$ must be broken by deleting $e$. This cycle must therefore contain
both $e$ and $\sigma_i(e)$, and hence at least two distinct edges in
direction~$i$. It follows that any $i$-slidable
edge must lie on the path between two edges in direction~$i$. The 
following lemma shows that a minimal path joining two edges of $T$
in direction $i$ contains exactly one $i$-slidable edge, and has as a 
consequence the fact that a spanning tree of $Q_3$ has exactly
four possible edge slides.

\begin{figure}
\begin{center}
\psfrag{empty}{$\emptyset$}
\psfrag{1}{$\{1\}$}
\psfrag{2}{$\{2\}$}
\psfrag{3}{$\{3\}$}
\psfrag{12}{$\{1,2\}$}
\psfrag{23}{$\{2,3\}$}
\psfrag{13}{$\{1,3\}$}
\psfrag{123}{$\{1,2,3\}$}
\psfrag{(a)}{(a)}
\psfrag{(b)}{(b)}
\includegraphics[scale=0.9]{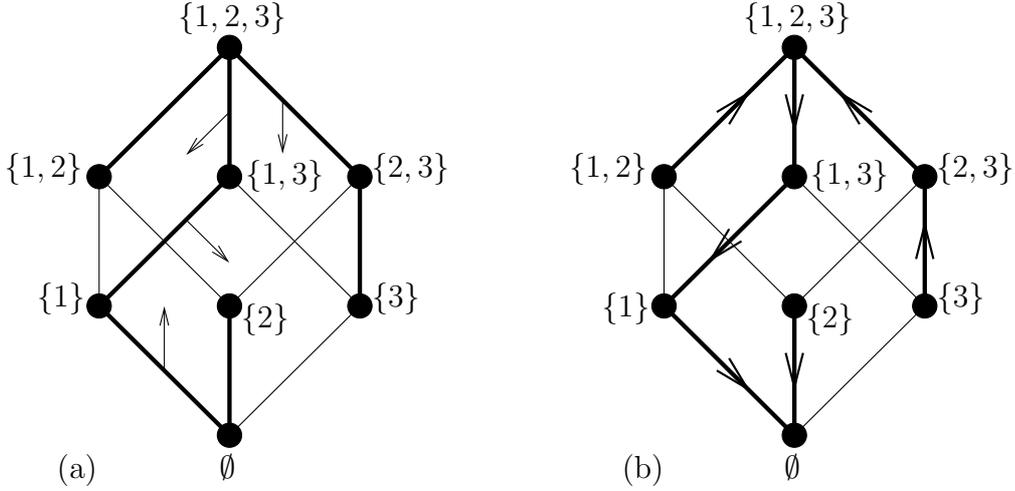}
\caption{(a) The slidable edges in a certain spanning tree $T$ of the
  three-cube.  The edge $(\emptyset,\{1\})$ may be slid up in
  direction $2$; the edge $(\{1\},\{1,3\})$ may be slid down in
  direction $1$; the edge $(\{1,3\},\{1,2,3\})$ may be slid down in
  direction $3$; and the edge $(\{2,3\},\{1,2,3\})$ may be slid down
  in direction $2$.  (b) The oriented edges of $T$ (see
  Section~\ref{orientations.sec}). The edges $(\{3\},\{2,3\})$,
  $(\{2,3\},\{1,2,3\})$ and $(\{1,2\},\{1,2,3\})$ are upward edges
  and the rest are downward.}
\label{3cubeslides.fig}
\end{center}
\end{figure}

\begin{lemma}
\label{3cubeslides.lem}
Let $T$ be a spanning tree of $Q_3$, and let $e_1$, $e_2$ be edges of
$T$ in direction $i$. If the path $P$ from $e_1$ to $e_2$ in $T$ does
not meet any other edge of $T$ in direction $i$, then $P$ contains a
unique $i$-slidable edge $e$. Moreover, if $T'$ is the result of
sliding $e$ in direction $i$, the slid edge $\sigma_i(e)$ is the
unique $i$-slidable edge on the path from $e_1$ to $e_2$ in $T'$.
\end{lemma}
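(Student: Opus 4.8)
The plan is to analyse the structure of the path $P$ together with the two edges $e_1$, $e_2$ in direction $i$, and to track which edges of $P$ can be slid. First I would set up coordinates: since $e_1$ and $e_2$ are both in direction $i$, each of them joins a vertex in $\Fm{i}$ to the corresponding vertex in $\Fp{i}$; write $e_1=(A,A\cup\{i\})$ and $e_2=(B,B\cup\{i\})$ with $A,B\subseteq\{1,2,3\}\setminus\{i\}$. The path $P$ runs from one endpoint of $e_1$ to one endpoint of $e_2$, and by hypothesis every edge of $P$ other than (possibly) its first and last edges lies entirely in one face $\Fp{i}$ or $\Fm{i}$. In fact, since $P$ meets no edge in direction $i$ in its interior, the whole of $P$ lies in a single face except for at most one direction-$i$ step at an end; a short case check on how $P$ enters and leaves shows that exactly one of $e_1,e_2$ is traversed ``as part of'' $P$ or, more cleanly, that $P$ together with $e_1$ and $e_2$ projects under $\sigma_i$ to a closed walk in a face of $Q_3$.

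The key observation I would exploit is that an edge $e\in P$ in direction $j\ne i$ is $i$-slidable precisely when $T-e+\sigma_i(e)$ is again a spanning tree, i.e.\ when $e$ and $\sigma_i(e)$ are the only two edges in direction $i$... more precisely, when adding $\sigma_i(e)$ to $T$ creates a cycle whose unique other ``crossing'' of the partition is broken by removing $e$. Concretely, removing $e$ from $T$ splits $T$ into two components; $e$ is $i$-slidable iff the two endpoints of $\sigma_i(e)$ lie in different components, which happens iff the path in $T$ between the two endpoints of $e$'s ``shadow'' $\sigma_i(e)$ passes through $e$. Since the only edges in direction $i$ that $P$ (hence the relevant part of $T$) uses are $e_1$ and $e_2$, this reduces to a statement about where $e$ sits relative to $e_1$ and $e_2$ along $P$. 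I would argue that there is a unique edge $e$ of $P$ such that $e_1$ and $\sigma_i$ of the two sides of $e$ are separated — informally, the ``place where the face flips'' — and that this is the unique $i$-slidable edge. For existence, one can count: at least one slidable edge exists on $P$ because $\sigma_i$ maps the cycle $e_1\,P\,e_2$ (closed up through the direction-$i$ edges) to a cycle in a face, and somewhere along $P$ the $\sigma_i$-image edge must re-enter the tree's component structure appropriately; for uniqueness, once the face has flipped it cannot flip back without another direction-$i$ edge, which $P$ does not contain.

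For the ``moreover'' clause, I would observe that sliding $e$ to $\sigma_i(e)$ turns $e_1,e_2$ into a pair of direction-$i$ edges of $T'$ whose connecting path $P'$ is obtained from $P$ by replacing $e$ with $\sigma_i(e)$ (the rest of $P$ is untouched), and that $P'$ still meets no other direction-$i$ edge of $T'$; applying the first part of the lemma to $T'$, $e_1$, $e_2$ gives a unique $i$-slidable edge on $P'$, and the symmetry of the slide construction (sliding $\sigma_i(e)$ back recovers $T$) forces this edge to be $\sigma_i(e)$. The main obstacle I anticipate is the bookkeeping in the first paragraph: pinning down exactly how $P$ sits relative to the two faces $\Fp{i}$, $\Fm{i}$ and which endpoints of $e_1,e_2$ it uses, since $P$ may or may not include $e_1$ or $e_2$ themselves as its terminal edges. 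Once that geometric picture is fixed — essentially, $P$ lives in one face, crosses to the other via one of $e_1$ or $e_2$, and possibly lives in the second face for a while — the identification of the unique slidable edge as the ``crossover point'' and the verification of the tree condition become routine, and can probably be done by direct inspection given that each face of $Q_3$ is just a 4-cycle.
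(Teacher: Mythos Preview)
Your plan lands in the right place at the very end --- ``direct inspection given that each face of $Q_3$ is just a 4-cycle'' --- and that is exactly how the paper proceeds: since $P$ contains no direction-$i$ edge it lies entirely in one face $F_i^{\pm}$, hence has length $1$, $2$, or $3$, and a short case analysis on that length (together with the fact that the opposite face is also a $4$-cycle whose intersection with $T$ is then almost forced) identifies the unique slidable edge and verifies the ``moreover'' clause directly.

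However, the conceptual argument you build up before that fallback is confused in a way that matters. You write that $P$ ``lies in a single face except for at most one direction-$i$ step at an end'' and look for ``the place where the face flips''; but $P$ contains \emph{no} direction-$i$ edges at all (neither $e_1$ nor $e_2$ belongs to $P$), so it never flips faces and there is no crossover point in the sense you describe. More seriously, your reduction ``this reduces to a statement about where $e$ sits relative to $e_1$ and $e_2$ along $P$'' is not justified: whether the path in $T$ between the endpoints of $\sigma_i(e)$ passes through $e$ depends on the structure of $T$ in the \emph{opposite} face, not only on the position of $e$ within $P$. For instance, if $\sigma_i(e)\in T$ then $e$ is certainly not slidable regardless of where it sits on $P$. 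The paper handles this by observing that once you fix $P$ (say of length $3$), the requirement that $T$ be a spanning tree with no further direction-$i$ edges incident to $P$ forces exactly two of the three edges $\sigma_i(v_0v_1),\sigma_i(v_1v_2),\sigma_i(v_2v_3)$ to lie in $T$; the missing one is $\sigma_i(e)$ for the unique slidable $e$. So the uniqueness genuinely comes from a global constraint on $T$, not from a local ``flip'' along $P$.

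Your treatment of the ``moreover'' clause --- reapplying the first part to $T'$ and invoking the involutive nature of the slide --- is fine and is also how the paper handles it.
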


\begin{remark}
In Section~\ref{higherdimensions.sec} we will see that we get
existence but not uniqueness for spanning trees of $Q_n$, $n\geq 4$.
\end{remark}

\begin{proof}
The lemma is proved by breaking it into cases
according to the length of $P$. 
The path $P$ lies in a face of $Q_3$, which is a square, and so
has length at most three; we will treat only the case where $P$ has
length exactly three, as the remaining cases are similar but easier.

When $P$ has length three it may be drawn as in
Figure~\ref{lengththree.fig}(a), in which the solid edges belong to
$T$ and $P$ is the path $(v_0,v_1,v_2,v_3)$. Consider the vertices
$\sigma_i(v_1)$ and $\sigma_i(v_2)$. They must belong to $T$; but,
since neither edge $(v_1,\sigma_i(v_1))$ nor $(v_2,\sigma_i(v_2))$
does, two of the three edges $(\sigma_i(v_0),\sigma_i(v_1))$, 
$(\sigma_i(v_1),\sigma_i(v_2))$ and $(\sigma_i(v_2),\sigma_i(v_3))$
must instead. It follows that $T$ must be given by one of the three
graphs shown in Figures~\ref{lengththree.fig}(b)--(d). In each
case, if $f=(\sigma_i(v_j),\sigma_i(v_{j+1}))$ is the dashed edge
that does not belong to $T$, then $e=\sigma_i(f)$ is the unique
$i$-slidable edge on $P$, and $f=\sigma_i(e)$ is the unique
$i$-slidable edge on the path from $e_1$ to $e_2$ in $T'$.
\end{proof}

\begin{lemma}
\label{i-slidecount.lem}
Let $T$ be a spanning tree of $Q_3$, and suppose that $T$ has
$u_i$ upward and $d_i$ downward edges in direction $i$, for a total
of $u_i+d_i=k_i$ edges in direction $i$. Then $T$ has precisely
$k_i-1$ edges that may be slid in direction $i$, and of these
$u_i$ may be slid downwards, and the remaining $d_i-1$ may be slid
upwards.
\end{lemma}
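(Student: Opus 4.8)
The plan is to analyse $T$ by deleting all $k_i$ of its edges in direction $i$. Since $T$ is a tree this leaves a spanning forest with $k_i+1$ components, each a connected subgraph of one of the squares $\Fp{i}$, $\Fm{i}$, and hence a path (possibly a single vertex); contracting each component to a point turns the edges in direction $i$ into the edge set of a tree $\tau$ on $k_i+1$ nodes, and $\tau$ respects the bipartition of its nodes into the $j$ ``upstairs'' components (those lying in $\Fp{i}$) and the $j'=k_i+1-j$ ``downstairs'' components, since every direction-$i$ edge joins $\Fp{i}$ to $\Fm{i}$. The degree in $\tau$ of a node is the number of direction-$i$ edges of $T$ meeting the corresponding component. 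The proof of Lemma~\ref{ddalt.lem} already shows that, with $T$ rooted at $\emptyset$, each of the $j$ upstairs components is adjacent to exactly one direction-$i$ edge pointing towards the root, and these are precisely the downward edges in direction $i$; hence $d_i=j$ and $u_i=k_i-j=j'-1$.

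The main step is to locate the $i$-slidable edges. Such an edge $e$ lies in a direction $\ne i$, so it survives the deletion and lies in a single component $C$; the $\deg_\tau(C)$ direction-$i$ edges of $T$ meeting $C$ attach to $\deg_\tau(C)$ distinct vertices of the path $C$, which determine $\deg_\tau(C)-1$ sub-paths, each between a consecutive pair of attachment points. I claim the $i$-slidable edges in $C$ are in bijection with these sub-paths. One direction: adding $\sigma_i(e)$ to $T$ creates a cycle containing $e$, necessarily of length $4$ or $6$ since it lies in $Q_3$, and by the argument preceding Lemma~\ref{3cubeslides.lem} it contains at least---hence exactly---two edges in direction $i$; the arc joining these two edges and passing through $e$ meets no further direction-$i$ edge, so it is a sub-path of $C$, placing $e$ on the sub-path between some consecutive pair of attachment points. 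Other direction: Lemma~\ref{3cubeslides.lem}, applied to the two direction-$i$ edges at a consecutive pair of attachment points, gives a unique $i$-slidable edge on the sub-path between them, and since these sub-paths are pairwise edge-disjoint the correspondence is a bijection. Summing over the $k_i+1$ components yields $\sum_C(\deg_\tau(C)-1)=2k_i-(k_i+1)=k_i-1$ edges slidable in direction $i$.

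It remains to split these into upward and downward slides. An $i$-slidable edge lying in an upstairs component lies in $\Fp{i}$, so sliding it in direction $i$ is a downward slide; likewise sliding a slidable edge that lies in a downstairs component is an upward slide. Each direction-$i$ edge has exactly one endpoint upstairs, so the degrees in $\tau$ of the $j$ upstairs nodes sum to $k_i$, and the number of downward slides is the sum of $\deg_\tau(U)-1$ over the upstairs components $U$, namely $k_i-j=j'-1=u_i$; symmetrically the number of upward slides is $k_i-j'=j-1=d_i-1$. I expect the bijection of the second paragraph to be the only real difficulty: one must confirm that the cycle created by an edge slide has length at most $6$ with exactly two direction-$i$ edges (this is where the small size of $Q_3$ enters), that this confines $e$ to the span of the attachment points of $C$, and that Lemma~\ref{3cubeslides.lem} is being invoked for exactly the consecutive pairs and no others. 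Once the bijection is established, the two counts follow at once from the structure of $\tau$.
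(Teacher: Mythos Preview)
Your overall strategy---deleting the direction-$i$ edges, forming the contracted tree $\tau$ on the resulting components, and matching the $i$-slidable edges with the $\deg_\tau(C)-1$ sub-paths between consecutive attachment points in each component $C$---is sound and closely related to the paper's own argument. The paper likewise deletes the direction-$i$ edges and counts components, but frames the count as a pair of lower bounds ($u_i$ downward and $d_i-1$ upward, obtained because the slidable edges must separate the $k_i$ attachment points in each face) matched against an upper bound of $k_i-1$ coming from the uniqueness clause of Lemma~\ref{3cubeslides.lem}. Your explicit bijection via $\tau$ is a slightly more structural packaging of the same ideas, and the degree-sum computation $\sum_C(\deg_\tau(C)-1)=2k_i-(k_i+1)$ is a nice touch.

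There is, however, a genuine gap in the ``one direction'' of your bijection. You assert that the cycle obtained by adding $\sigma_i(e)$ to $T$ is ``necessarily of length $4$ or $6$ since it lies in $Q_3$'', and use this to deduce it contains exactly two direction-$i$ edges. The length claim is false. Take $T$ to be the Hamiltonian path
\[
\{1,3\}-\{1,2,3\}-\{1,2\}-\{1\}-\emptyset-\{2\}-\{2,3\}-\{3\}
\]
and $e=(\emptyset,\{1\})$ with $i=3$. Then $\sigma_3(e)=(\{3\},\{1,3\})$ joins the two ends of the path, $e$ is $3$-slidable, and the resulting cycle is Hamiltonian, of length~$8$. (It still has exactly two direction-$3$ edges, so your \emph{conclusion} survives; your \emph{justification} does not.)

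The easiest repair is to drop the length claim altogether: you only need that the maximal arc of the cycle through $e$ that avoids direction-$i$ edges is a path in $T$ lying in a single face, hence inside the component $C$, with both endpoints incident to direction-$i$ edges of $T$. That already places $e$ between two attachment points of $C$, and therefore on the sub-path between some consecutive pair, which is all the bijection requires. With this fix your argument goes through.
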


\begin{proof}
By Lemma~\ref{3cubeslides.lem} the $i$-slidable edges of $T$ must
totally disconnect the $i$-edges of $T$; since there are $k_i$ edges
in direction $i$ at least $k_i-1$ edges are required to totally
disconnect them.  So there are at least $k_i-1$ $i$-slidable edges.

To bound the number of upward and downward $i$-slides from below we
delete all edges of $T$ in direction $i$, and consider the upper and
lower faces separately. Deleting all the $i$-edges of $T$ divides
$T$ into $k_i+1$ connected components, and as seen in the proof of
Lemma~\ref{ddalt.lem}, a total of $d_i$ of these components lie
in $\Fp{i}$, with the remaining $u_i+1$ in $\Fm{i}$. Now delete
the $i$-slidable edges of $T$. Since this totally disconnects the $i$-edges
of $T$ it must further divide the $d_i$ components upstairs into
at least $k_i$ components, requiring at least $k_i-d_i=u_i$
slidable edges upstairs; and similarly it 
must further divide the $u_i+1$ components downstairs into
at least $k_i$ components, requiring at least $k_i-(u_i+1)=d_i-1$
slidable edges downstairs.

We now use the uniqueness clause of Lemma~\ref{3cubeslides.lem} 
to show that there can be no more than $k_i-1$ edges that may be
slid in direction $i$. Let $E=\{e_1,e_2,\ldots,e_{k-1}\}$ be a set
of $i$-slidable edges that totally disconnect the $i$-edges of $T$, and
let $e$ be any $i$-slidable edge of $T$. Then $e$ must lie on a path
$P$ in $T$ from one $i$-edge of $T$ to another, and we may choose $P$
so that it does not meet any other $i$-edge of $T$.  Then $e$ is
the unique $i$-slidable edge on $P$; but on the other hand, $P$ must
also cross an edge in $E$, since these totally disconnect the $i$-edges
of $T$. It follows that $e=e_j$ for some $j$, so $T$ has exactly
$k_i-1$ edges that may be slid in direction $i$. 
\end{proof}

\begin{corollary}
A spanning tree $T$ of $Q_3$ has precisely four possible edge slides.
\end{corollary}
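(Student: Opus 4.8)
The plan is to reduce the corollary to a simple edge count, since the substantive work has already been done in Lemma~\ref{i-slidecount.lem}. An edge slide of $T$ is specified by a choice of direction $i\in\{1,2,3\}$ together with an $i$-slidable edge of $T$, so the total number of edge slides available is obtained by summing, over the three directions, the number of $i$-slidable edges of $T$.

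First I would invoke Lemma~\ref{i-slidecount.lem}, which tells us that $T$ has exactly $k_i-1$ edges that may be slid in direction $i$, where $k_i$ denotes the number of edges of $T$ in direction $i$. Summing over $i$, the total number of edge slides is $\sum_{i=1}^{3}(k_i-1)=(k_1+k_2+k_3)-3$. Since every edge of $T$ lies in exactly one of the three directions, $k_1+k_2+k_3=|E(T)|$; and because $T$ is a spanning tree of $Q_3$, which has $2^3=8$ vertices, we have $|E(T)|=8-1=7$. Hence the total number of edge slides is $7-3=4$.

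The only point requiring a moment's care is that this sum does not overcount, i.e.\ that distinct (direction, edge) pairs yield distinct edge slides. This holds because the resulting tree $T'$ determines the unordered pair of edges $\{e,\sigma_i(e)\}$ as the symmetric difference $E(T)\symd E(T')$, and from this pair one recovers both the edge $e$ (the member lying in $T$) and the direction $i$ (the coordinate in which the corresponding endpoints of the two parallel edges differ). So no edge slide is counted twice in the sum above, and the count of $4$ stands. I do not expect any real obstacle here beyond correctly citing Lemma~\ref{i-slidecount.lem} and recalling that a spanning tree of $Q_3$ has seven edges.
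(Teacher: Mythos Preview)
Your proposal is correct and follows essentially the same route as the paper: the paper's proof simply says the result ``is immediate from Lemma~\ref{i-slidecount.lem} and the fact that $T$ has seven edges, with at least one edge in each of the three directions,'' which is exactly your summation $\sum_i(k_i-1)=7-3=4$. Your added paragraph on non-overcounting is a harmless extra caution, but is not really needed once an edge slide is understood as a (direction, slidable edge) pair.
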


\begin{proof}
This is immediate from Lemma~\ref{i-slidecount.lem} and the fact
that $T$ has seven edges, with at least one edge in each of the three
directions.
\end{proof}

\begin{figure}
\begin{center}
\psfrag{e1}{$e_1$}
\psfrag{e2}{$e_2$}
\psfrag{v0}{$v_0$}
\psfrag{v1}{$v_1$}
\psfrag{v2}{$v_2$}
\psfrag{v3}{$v_3$}
\psfrag{a}{(a)}
\psfrag{b}{(b)}
\psfrag{c}{(c)}
\psfrag{d}{(d)}
\psfrag{sv0}{$\sigma_i(v_0)$}
\psfrag{sv1}{$\sigma_i(v_1)$}
\psfrag{sv2}{$\sigma_i(v_2)$}
\psfrag{sv3}{$\sigma_i(v_3)$}
\includegraphics[scale=0.6]{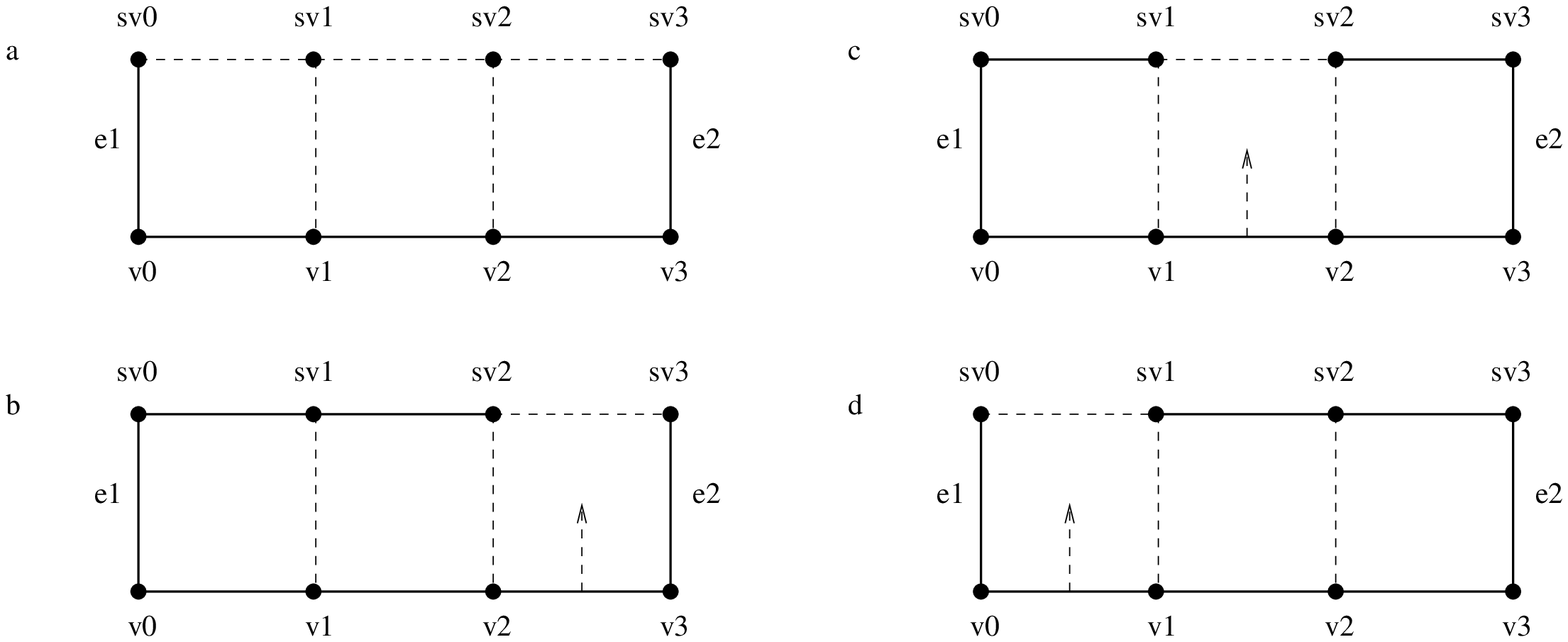}
\caption{The case where $P$ has length 3 in the proof of
  Lemma~\ref{3cubeslides.lem}.}
\label{lengththree.fig}
\end{center}
\end{figure}

\subsection{Effect on orientations}
\label{orientations.sec}

Clearly, the effect of an edge slide in direction~$i$ on the decoupled
degree monomial is to multiply it by $x_i^{\pm 2}$: by $x_i^2$ if the
edge is slid up, and $x_i^{-2}$ if the edge is slid down. Thus, the
number of upward edges in direction~$i$ must change by $\pm 1$, and
there can be no net change in the number of upward edges in other
directions. The following lemma may be used to show that in the case
of the $3$-cube this occurs through a reversal of orientation of
exactly one edge in direction~$i$.

\begin{lemma}
\label{orientations.lem}
Given an $i$-slidable edge $e$
of a spanning tree $T$, let $C$ be the cycle created by adding
$\sigma_i(e)$ to $T$; and let $T_-$ be the component of $T-e$ that
contains the root, and $T_+$ the component that does not.
Then sliding $e$ in direction $i$ reverses the orientation of
precisely those edges which belong to both $C$ and $T_+$.
The orientation of $\sigma_i(e)$ co-incides with that of $e$.
\end{lemma}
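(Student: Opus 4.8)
The plan is to describe the edges of $T'=T-e+\sigma_i(e)$ explicitly. Write $e=(v_-,v_+)$ with $v_-\in T_-$ and $v_+\in T_+$, and let $a,b$ be the endpoints of $\sigma_i(e)$ lying in $T_-$ and in $T_+$ respectively; these are distinct because $T'$ is a tree, so $\sigma_i(e)$ must reconnect the two components of $T-e$. As observed in the discussion preceding Lemma~\ref{3cubeslides.lem}, the cycle $C$ consists of $\sigma_i(e)$ together with the path $P'$ in $T$ joining $a$ to $b$, and $P'$ contains $e$. Since $P'$ is a simple path through $e=(v_-,v_+)$, deleting $e$ breaks it into a subpath from $a$ to $v_-$ (which lies in $T_-$) and a subpath from $v_+$ to $b$ (which lies in $T_+$); as $\sigma_i(e)\notin E(T)$ and $e\notin E(T_+)$, it follows that $C\cap T_+$ is exactly the path from $v_+$ to $b$ in $T_+$.

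First I would verify the statement about reversed orientations. In $T$ every edge of $T_-$ points towards the root, while every edge of $T_+$ points towards $v_+$, since $v_+$ is the vertex of $T_+$ through which $e$ leads to the root. In $T'$ the root still lies in $T_-$, now reached from $T_+$ through $\sigma_i(e)$ at $b$, so the edges of $T_-$ keep their orientations while the edges of $T_+$ point towards $b$ instead. Re-rooting a tree from $v_+$ to $b$ reverses precisely the edges on the path from $v_+$ to $b$, and by the previous paragraph this path is $C\cap T_+$.

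Next comes the sign of $\sigma_i(e)$, and here the one nontrivial input is the claim that $a=\sigma_i(v_-)$ and $b=\sigma_i(v_+)$, in other words that $\sigma_i$ does not interchange the two components of $T-e$. Granting this, the rest is a cardinality count: since $v_-$ and $v_+$ differ in a coordinate $j\neq i$ we have $i\in v_-$ if and only if $i\in v_+$, so $\sigma_i$ changes $|v_-|$ and $|v_+|$ by the same amount and $|\sigma_i(v_-)|-|\sigma_i(v_+)|=|v_-|-|v_+|$. Hence crossing $e$ from $v_+$ to $v_-$ increases cardinality exactly when crossing $\sigma_i(e)$ from $b=\sigma_i(v_+)$ to $a=\sigma_i(v_-)$ does; as the root lies in $T_-$, which contains both $v_-$ and $a$, these are the crossings used in computing $\mu(e)$ in $T$ and $\mu(\sigma_i(e))$ in $T'$, so the two signs agree.

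The main obstacle is the component claim, which I would prove by a counting argument special to $Q_3$. The statement is invariant under the automorphisms of $Q_3$ fixing the direction $i$, so I may take $i=3$ and $e=(\emptyset,\{1\})$. Suppose for contradiction that $\{1,3\}=\sigma_3(\{1\})$ lies in the component of $\emptyset$ in $T-e$ while $\{3\}=\sigma_3(\emptyset)$ lies in the component of $\{1\}$. The path in $T$ from $\{3\}$ to $\{1,3\}$ must cross the cut edge $e=(\emptyset,\{1\})$, so it splits as a path from $\{3\}$ to $\{1\}$ avoiding $\emptyset$ and $\{1,3\}$, followed by $e$, followed by a path from $\emptyset$ to $\{1,3\}$ avoiding $\{1\}$ and $\{3\}$; these two subpaths lie in different components of $T-e$ and so share no vertices. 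But the only length-$2$ routes from $\{3\}$ to $\{1\}$ pass through $\emptyset$ or $\{1,3\}$, and the only length-$2$ routes from $\emptyset$ to $\{1,3\}$ pass through $\{1\}$ or $\{3\}$, so both subpaths have length at least $4$ and hence each uses at least three of the four vertices $\{2\},\{1,2\},\{2,3\},\{1,2,3\}$ --- impossible for two vertex-disjoint subpaths. (Alternatively, one could enumerate the possibilities for $C$ by its length, which in $Q_3$ is $4$, $6$, or $8$, in the style of the proof of Lemma~\ref{3cubeslides.lem}.)
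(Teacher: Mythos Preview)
Your argument for the first assertion --- that exactly the edges of $C\cap T_+$ are reversed --- is correct and is essentially the paper's argument rephrased: the paper tracks, for each vertex $v\in T_+$, the path to the root in $T$ versus in $T'$ and observes that the portion inside $T_+$ is redirected around $C$; your re-rooting of $T_+$ from $v_+$ to $b$ says the same thing.

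Where your proof genuinely differs is on the last sentence, $\mu(\sigma_i(e))=\mu(e)$. The paper's written proof does not explicitly treat this clause at all, whereas you correctly isolate the key point: one must know that $\sigma_i(v_-)\in T_-$ and $\sigma_i(v_+)\in T_+$, i.e.\ that $\sigma_i$ does not swap the two components of $T-e$. Your $Q_3$-specific path-length argument for this ``component claim'' is valid (the reduction to $e=(\emptyset,\{1\})$ is legitimate because the claim is about the components of $T-e$ and is independent of the root, so any automorphism commuting with $\sigma_i$ may be applied). An alternative closer to the paper's style would be to read the claim off from the explicit description of $C$ obtained in the case analysis of Lemma~\ref{3cubeslides.lem}: there $C$ is seen to consist of the two $i$-edges $e_1,e_2$ together with $P$ and $\sigma_i(P)$, from which one checks directly that $e$ and $\sigma_i(e)$ lie on the same ``side'' of $e_1,e_2$ and hence that $\sigma_i(v_\pm)$ stays in the component of $v_\pm$. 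Your route is more self-contained; the paper's route reuses structure already established, at the cost of leaving the reader to fill in this step.
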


\begin{proof}
Let $T'$ be the tree resulting from the edge slide, and refer to
Figure~\ref{paths.fig}. If $u$ is a vertex belonging to $T_-$ then the
paths from $u$ to the root are the same in $T$ and $T'$, so edges in 
$T_-$ have identical orientations in $T$ and $T'$. 
If $v$ is a vertex lying in $T_+$ then the path from $v$ to the root
may be expressed in the form $PQR$, where
\begin{itemize}
\item
$P$ is the path from $v$ to the closest vertex $w$ lying on $C$;
\item
$Q$ is the path in $C$ from $w$ to $x$ that crosses $e$, where $x$ is
  the closest point on $C$ to the root;
\item
$R$ is the path from $x$ to the root.
\end{itemize}
Then the path from $v$ to the root in $T'$ has the the form $PQ'R$,
where $Q'$ is the path in $C$ from $w$ to $x$ that crosses $\sigma_i(e)$. 
It follows that edges of $T_+$ that do not lie on $C$ are unchanged in 
orientation, and by considering the cases where $v$ is the vertex of
$e$ or $\sigma_i(e)$ in $T_+$ we see that edges in $C\cap T_+$ are
reversed.
\end{proof}

\begin{figure}
\begin{center}
\psfrag{e}{$e$}
\psfrag{r}{$\emptyset$}
\psfrag{u}{$u$}
\psfrag{v}{$v$}
\psfrag{w}{$w$}
\psfrag{x}{$x$}
\psfrag{C}{$C$}
\psfrag{slide}{slide}
\psfrag{T-}{$T_-$}
\psfrag{T+}{$T_+$}
\includegraphics[scale=0.6]{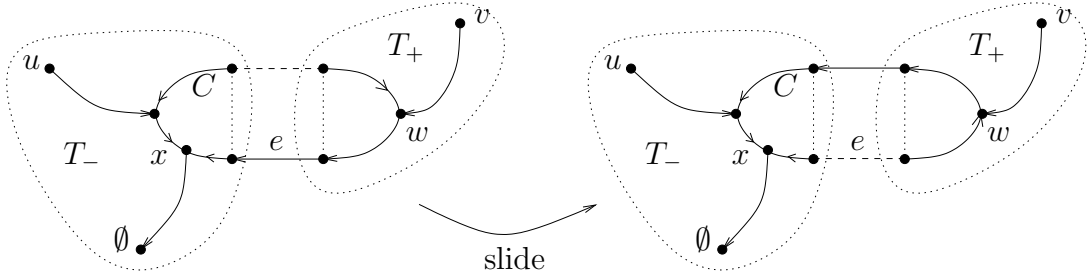}
\caption{The effect of an edge-slide on orientations. Only those edges
belonging to $C\cap T_+$ are reversed.}
\label{paths.fig}
\end{center}
\end{figure}

In the case of the $3$-cube, with notation as in
Lemma~\ref{3cubeslides.lem} it is clear that $C$ consists of 
$e$, $e_1$, $e_2$ and $P$ together with $\sigma_i(P)$. Thus, sliding
$e$ in direction $i$ reverses the orientation of exactly one edge
in direction $i$, namely whichever of $e_1$ and $e_2$ lies in $T_+$. 
Since the orientation of $e$ points from $T_+$ to $T_-$, the
edge reversed by sliding $e$ is whichever of $e_1$ and $e_2$ that
$e$ points \emph{away} from.  

\subsection{Independent slides}

We show that parallel edge-slides on a spanning tree of the $3$-cube
commute in the following sense.

Let $T$ be a spanning tree, and let $S=\{e_1,\ldots,e_k\}$ be a set of 
$i$-slidable edges of $T$. Given a vector 
$\mathbf{\eps}=(\eps_1,\ldots,\eps_k)\in\{0,1\}^k$, let $T_\mathbf{\eps}$
be the subgraph
\begin{equation}
\label{independence.eq}
T_{\mathbf{\eps}} = \bigl(T\setminus S\bigr) 
                \cup \{\sigma_i^{\eps_1}(e_1),\ldots,\sigma_i^{\eps_k}(e_k)\}.
\end{equation}
Thus, $T_\mathbf{\eps}$ is the subgraph of the cube obtained by choosing
whether or not to slide each edge $e_j\in S$ according to the
value of $\eps_j\in\{0,1\}$.
We will say $S$ is an
\emph{independently slidable set} if $T_\mathbf{\eps}$ is a spanning tree
of the cube for all $\mathbf{\eps}\in\{0,1\}^k$.

In Lemma~\ref{3cubeindependence.lem} we show that parallel edge
slides in a spanning tree of the $3$-cube are independent in the above
sense, which will allow us to count the trees combinatorially.

\begin{lemma}
\label{3cubeindependence.lem}
The set $S_i(T)$
consisting of the $i$-slidable edges of a spanning tree $T$ of the
$3$-cube is an independently slidable set. 
\end{lemma}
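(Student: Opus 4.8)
The plan is to show that each $T_{\mathbf{\eps}}$ is connected; this suffices, because $T_{\mathbf{\eps}}$ is obtained from $T$ by deleting the edges of $S_i(T)$ and adding the same number of new edges, all distinct from one another and from the edges of $T\setminus S_i(T)$ (each $e_j\in S_i(T)$ satisfies $\sigma_i(e_j)\notin T$ by slidability), so $T_{\mathbf{\eps}}$ has $|E(T)|=|V(Q_3)|-1$ edges and is a spanning tree as soon as it is connected.

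To organise the argument I would let $R=T\setminus S_i(T)$, a forest spanning all of $Q_3$ with connected components $R_1,\dots,R_p$, and note that $R$ is a subgraph of every $T_{\mathbf{\eps}}$. Since each $R_m$ is connected, $T_{\mathbf{\eps}}$ is connected if and only if the graph obtained by contracting each $R_m$ to a point is connected; that contracted graph has the $p$ vertices $R_1,\dots,R_p$ and one edge for each $e_j\in S_i(T)$, namely the pair of components containing the endpoints of $\sigma_i^{\eps_j}(e_j)$. As $T=T_{\mathbf 0}$ is connected, the contraction of $T$ is a connected graph on these $p$ vertices; so it would suffice to prove that for every $e_j\in S_i(T)$ the slid edge $\sigma_i(e_j)$ joins the \emph{same} pair of components $R_m$ as $e_j$ does. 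Then $T_{\mathbf{\eps}}$ and $T$ have the same contraction, and hence $T_{\mathbf{\eps}}$ is connected.

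The heart of the matter is this claim about the slid edge, and I expect it to be the main obstacle. Fix $e=e_j\in S_i(T)$ and write $e=(v,w)$. By the remarks preceding Lemma~\ref{3cubeslides.lem} there are $i$-edges $e_1,e_2$ of $T$ such that $e$ lies on the path $P\subseteq T$ from $e_1$ to $e_2$ and $P$ meets no other $i$-edge; by Lemma~\ref{3cubeslides.lem}, $e$ is the unique $i$-slidable edge on $P$. By the description of the cycle given in the discussion following Lemma~\ref{orientations.lem}, the cycle $C$ created in $T$ by adding $\sigma_i(e)$ is $C=\{e_1,e_2\}\cup P\cup\sigma_i(P)$. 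I would first observe that $e$ is the only edge of $S_i(T)$ lying on $C$: the edges $e_1,e_2$ are in direction $i$; the edges of $P$ other than $e$ are not $i$-slidable by the uniqueness clause of Lemma~\ref{3cubeslides.lem}; and each edge of $\sigma_i(P)\cap T$ has the form $\sigma_i(g)$ with $g\in P\subseteq T$, and an edge whose $\sigma_i$-image lies in $T$ cannot be $i$-slidable. Since $C\setminus\sigma_i(e)\subseteq T$ (it is the cycle of the fundamental cut), it follows that $C\setminus\{e,\sigma_i(e)\}$ lies entirely in $R$. Now $e_1=(a,\sigma_i(a))$ and $e_2=(b,\sigma_i(b))$ for the endpoints $a,b$ of $P$; deleting $e$ from $P$ leaves two subpaths, and (relabelling $v,w$ if necessary) one joins $v$ to $a$ and the other joins $w$ to $b$, while deleting $\sigma_i(e)$ from $\sigma_i(P)$ leaves a subpath joining $\sigma_i(v)$ to $\sigma_i(a)$ and one joining $\sigma_i(w)$ to $\sigma_i(b)$. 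These subpaths, together with $e_1$ and $e_2$, all lie in $R$, so $v,\sigma_i(v),a,\sigma_i(a)$ lie in a single component of $R$ and $w,\sigma_i(w),b,\sigma_i(b)$ lie in a single component, and the two components differ because $e$ is a bridge of the tree $T$. Hence $e$ and $\sigma_i(e)$ join the same pair of components of $R$, which completes the argument.
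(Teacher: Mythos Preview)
Your proof is correct, and its core observation coincides with the paper's: for each $e\in S_i(T)$, the only $i$-slidable edge of $T$ lying on the fundamental cycle $C_e$ obtained by adding $\sigma_i(e)$ is $e$ itself. The paper states this in a single sentence (``the edges of $C$ are unaffected by $i$-slides of other edges in $S_i(T)$'') and then argues inductively: after sliding any other $e'\in S_i(T)$, the cycle $C_e$ is unchanged, so $e$ remains $i$-slidable, and one may perform the slides one at a time. You instead package the same observation through the forest $R=T\setminus S_i(T)$: since $C_e\setminus\{e,\sigma_i(e)\}\subseteq R$, the edges $e$ and $\sigma_i(e)$ join the same pair of components of $R$, so every $T_{\boldsymbol\eps}$ has the same contraction as $T$ and is therefore connected. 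Your route avoids the implicit induction and yields connectedness of all $2^{|S_i(T)|}$ graphs simultaneously, at the cost of a little more setup; the paper's route is terser but leaves the reader to unpack why ``$e$ remains slidable'' suffices for arbitrary $\boldsymbol\eps$. Both arguments rest on the same local fact about $C_e$.
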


\begin{proof}
Referring again to Figure~\ref{lengththree.fig}, we see that $i$-slidability
of a given edge $e\in S_i(T)$ is a local property, depending only on the 
minimal cycle $C$ obtained by adding $\sigma_i(e)$ to $T$. The edges
of $C$ are unaffected by $i$-slides of other edges in $S_i(T)$, and so
$e$ remains slidable regardless of how these other edges are slid.
\end{proof}

\section{Counting the spanning trees of the three-cube}
\label{thecount.sec}

\subsection{Strategy}

We will now use the results of the previous section to show
that $Q_3$ has $2^4\cdot 2^3\cdot 3=384$ spanning trees. We will do 
this by constructing a projection from $\tree{Q_3}$ onto a space of trees
that are easily counted; showing that there are $2^3\cdot3$ trees
in this family; and that each fibre of the projection has size $2^4$. 

For the target of the projection we define a spanning tree of $Q_3$ to
be \emph{upright} if all of its edges are oriented downwards. An
example appears in Figure~\ref{upright.fig}. We denote the set of
upright trees of $Q_3$ by $\utree{Q_3}$, and will count the trees
by constructing a projection $\pi\co\tree{Q_3}\to\utree{Q_3}$. 

\begin{figure}
\begin{center}
\psfrag{empty}{$\emptyset$}
\psfrag{1}{$\{1\}$}
\psfrag{2}{$\{2\}$}
\psfrag{3}{$\{3\}$}
\psfrag{12}{$\{1,2\}$}
\psfrag{23}{$\{2,3\}$}
\psfrag{13}{$\{1,3\}$}
\psfrag{123}{$\{1,2,3\}$}
\includegraphics[scale=0.8]{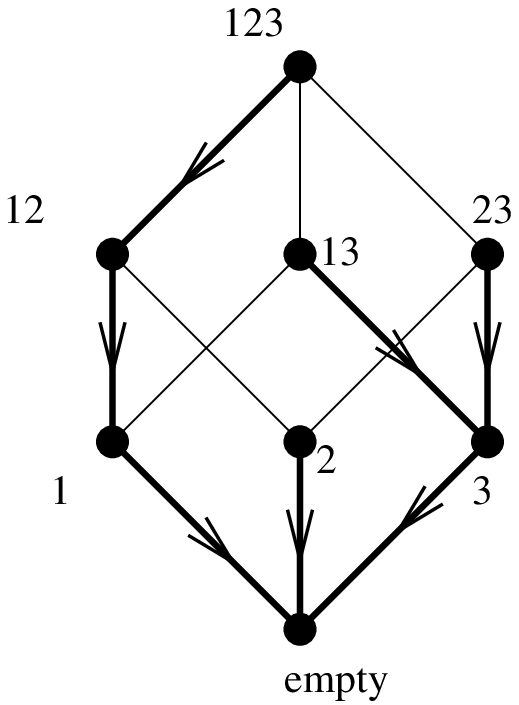}
\caption{An upright spanning tree of $Q_3$.  The associated
section $\phi$ (see Section~\ref{upright.sec}) is determined
by $\phi(\{1,2,3\})=3$, $\phi(\{1,2\})=2$, $\phi(\{1,3\})=1$ and 
$\phi(\{2,3\})=2$.}
\label{upright.fig}
\end{center}
\end{figure}

If $A$ is a subset of $X$ we will say that a function $f\co X\to A$ is
a \emph{retraction} if the restriction of $f$ to $A$ is the identity.
We will build the projection $\pi$ up as a composition of retractions, 
and to this end we define a spanning tree to be \emph{upright with
respect to direction $i$} if all of its edges in direction $i$ are
oriented downwards. We write $\downwardi{Q_3}$ for the set of trees
that are upright with respect to direction $i$, and observe that
\[
\utree{Q_3}=\bigcap_{i=1}^3 \downwardi{Q_3}.
\]
We will proceed by constructing retractions 
$\pi_i\co\tree{Q_3}\to\downwardi{Q_3}$, and then set
$\pi=\pi_1\circ\pi_2\circ\pi_3$. 

\subsection{The retractions}
\label{retract.sec}

Given a spanning tree $T$ of $Q_3$ we set 
$S=S_i(T)=\{e_1,\ldots,e_{k_i-1}\}$ in
equation~\eqref{independence.eq}, and let
\[
\islide{T}=\left\{T_\eps|\eps\in\{0,1\}^{|S_i(T)|}\right\}. 
\]
By Lemma~\ref{3cubeindependence.lem} this set consists of $2^{k_i-1}$
spanning trees of $Q_3$, and we claim that these sets form a partition
of $\tree{Q_3}$.  Indeed, suppose that $T'=T_\eps\in\islide{T}$.  Then
$T_\eps$ has the same number of $i$-edges as $T$, and hence the same
number of $i$-slidable edges as $T$, by Lemma~\ref{i-slidecount.lem};
thus we necessarily have $S_i(T_\eps)=\{\sigma_i^{\eps_1}(e_1),\ldots,
\sigma_i^{\eps_{k_i-1}}(e_{k_i-1})\}$, since these edges are
$i$-slidable in $T_\eps$. Then for $\delta\in\{0,1\}^{|S_i(T_\eps)|}$
we have $(T_\eps)_\delta=T_{\eps+\delta}$, and so 
$\islide{T_\eps}=\islide{T}$. The sets $\islide{T}$ are therefore
disjoint or equal, and since $T$ necessarily belongs to $\islide{T}$,
they form a partition as claimed. We note in passing that these sets
are the equivalence classes of the relation 
$\sim_i$ defined by
$T_1\sim_i T_2$ if and only if $T_1$ can be transformed into $T_2$ by
a series of $i$-slides.

Now, for each spanning tree $T$ of $Q_3$, carrying out all possible downward
$i$-slides gives a unique choice of $T_{\tilde\eps}\in\islide{T}$ such
that $\sigma_i^{\tilde\eps_j}(e_j)$ lies in $\Fm{i}$ for each
$j=1,\ldots,k_i-1$. For such a tree only upward $i$-slides are
possible, and so $T_{\tilde\eps}$ can have only downward $i$-edges, by
Lemma~\ref{i-slidecount.lem}. Setting $\pi_i(T)=T_{\tilde\eps}$ we 
therefore obtain a retraction $\pi_i\co\tree{Q_3}\to\downwardi{Q_3}$. 
For each $T\in\downwardi{Q_3}$ the fibre of this map is
$\islide{T}$, and so has cardinality $2^{k_i-1}$, where $k_i$ is the
number of $i$-edges of $T$. 

We now consider the composition $\pi=\pi_1\circ\pi_2\circ\pi_3$.
For a spanning tree $T$ of $Q_3$ the tree $\pi(T)$ is obtained by
\begin{enumerate}
\item
carrying out all possible downward edge slides in direction 3 in
$T$ to get $\pi_3(T)$; then
\item
carrying out all possible downward edge slides in direction 2
in $\pi_3(T)$, to get $\pi_2(\pi_3(T))$; then
\item
carrying out all possible downward edge slides in direction 1 in
$\pi_2(\pi_3(T))$, to get $\pi_1(\pi_2(\pi_3(T)))=\pi(T)$.
\end{enumerate}
An example appears in Figure~\ref{retraction.fig}. For this map
we claim
\begin{lemma}
\label{preimage.lem}
The map $\pi=\pi_1\circ\pi_2\circ\pi_3$ is a retraction from
$\tree{Q_3}$ onto $\utree{Q_3}$. For each tree $T\in\utree{Q_3}$
the preimage of $T$ contains exactly $2^4$ trees.
\end{lemma}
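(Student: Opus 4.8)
The plan is to verify the two claims of the lemma separately, leaning on the retraction-by-retraction construction. That $\pi$ is a retraction onto $\utree{Q_3}$ follows almost formally: each $\pi_i$ is a retraction onto $\downwardi{Q_3}$, so $\pi(T)\in\downwardi[1]{Q_3}$ because $\pi_1$ lands there; and I must check that applying $\pi_2$ and then $\pi_1$ does not destroy uprightness with respect to direction $3$, and that $\pi_1$ does not destroy uprightness with respect to direction $2$. The key observation is that a downward $j$-slide ($j\neq i$) reverses the orientation of exactly one edge in direction $j$ and leaves the orientations of all edges in the other two directions unchanged — this is the consequence of Lemma~\ref{orientations.lem} spelled out at the end of Section~\ref{orientations.sec}, since the cycle $C$ meets direction $i$ in exactly two edges and direction $j$ in exactly two edges for a length-one or length-two path, and one checks directly that a $j$-slide fixes all $i$-edge orientations. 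Hence once $T$ is upright in direction $3$, applying $\pi_2$ (a sequence of $2$-slides) keeps it upright in direction $3$; and once it is upright in directions $2$ and $3$, applying $\pi_1$ keeps it upright in both. So $\pi(T)\in\utree{Q_3}$, and since each $\pi_i$ fixes $\downwardi{Q_3}$ pointwise, $\pi$ fixes $\utree{Q_3}$ pointwise.

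For the fibre count I would argue by unwinding the composition. Fix $T\in\utree{Q_3}$. A tree $T'$ lies in $\pi^{-1}(T)$ iff $\pi_3(T')$ lies in $\pi_2^{-1}(\pi_1^{-1}(T))$. Since $\pi_1$ has fibre $\islide[1]{T}$ over $T$ of size $2^{k_1-1}$, and $\pi_2$ has fibre $\islide[2]{U}$ over each $U$ of size $2^{k_2(U)-1}$, and $\pi_3$ similarly, the fibre size is a sum over intermediate trees of products of such powers of two. The clean way to make this equal $2^4$ is to show that the relevant edge-counts stabilise: if $T\in\utree{Q_3}$ has $k_i$ edges in direction $i$, then $k_1+k_2+k_3=7$ with each $k_i\geq 1$, and I claim the fibre of $\pi$ over $T$ has size exactly $2^{(k_1-1)+(k_2-1)+(k_3-1)}=2^{7-3}=2^4$. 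To see this I need that every tree $U$ appearing as $\pi_2(\pi_3(T'))=\pi_1^{-1}$-preimage candidate has the same number of $1$-edges as $T$, and every $W=\pi_3(T')$ in the relevant set has the same number of $2$-edges as its image — i.e.\ that the slides used by $\pi_1$ do not change $k_2$ or $k_3$, and those used by $\pi_2$ do not change $k_3$. But an $i$-slide changes only the count of $i$-edges (it replaces an edge in direction $j\neq i$ by another edge in direction $j$), so $k_j$ is invariant under $i$-slides for $j\neq i$. Therefore $k_3$ is constant on $\islide[3]{T'}$ and also unaffected by the subsequent $2$- and $1$-slides; $k_2$ is unaffected by $1$-slides; and working backwards, the fibre of $\pi$ decomposes as a tree of choices with $2^{k_3-1}$ choices for the direction-$3$ slide, then $2^{k_2-1}$ for direction $2$, then $2^{k_1-1}$ for direction $1$, all the $k_i$ being those of $T$ itself.

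Concretely, I would make this a counting bijection: given $T\in\utree{Q_3}$, the preimage $\pi^{-1}(T)$ is in bijection with $\{0,1\}^{k_1-1}\times\{0,1\}^{k_2-1}\times\{0,1\}^{k_3-1}$, where a triple $(\eps^{(1)},\eps^{(2)},\eps^{(3)})$ is sent to the tree obtained from $T$ by first applying the upward $1$-slides selected by $\eps^{(1)}$, then the upward $2$-slides selected by $\eps^{(2)}$ on the resulting tree, then the upward $3$-slides selected by $\eps^{(3)}$. The inverse is $T'\mapsto$ (record of which downward $3$-slides are needed to reach $\pi_3(T')$, then which downward $2$-slides to reach $\pi_2(\pi_3(T'))$, then which downward $1$-slides); well-definedness of the record uses the uniqueness part of Lemma~\ref{i-slidecount.lem} together with independence (Lemma~\ref{3cubeindependence.lem}), exactly as in the construction of $\pi_i$. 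Since $(k_1-1)+(k_2-1)+(k_3-1)=7-3=4$, the fibre has $2^4$ elements.

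The main obstacle is the bookkeeping in the middle step: confirming that the three groups of slides genuinely act independently across directions — i.e.\ that doing direction-$2$ slides does not change which direction-$1$ or direction-$3$ slides are available or upward, and similarly for the other directions — so that the product $2^{k_1-1}\cdot 2^{k_2-1}\cdot 2^{k_3-1}$ is exactly right with no over- or under-counting. This reduces to the facts that an $i$-slide preserves $k_j$ for $j\neq i$ (immediate) and that $i$-slidability and the up/down status of a $j$-edge are governed by disjoint local data in $Q_3$ (which is what Lemma~\ref{3cubeindependence.lem} and the orientation analysis give us); assembling these carefully is the real content of the proof.
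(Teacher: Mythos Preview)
Your approach is the same as the paper's, and the fibre-count half is correct: edge slides preserve every direction-count $k_j$, so pulling back through $\pi_1$, then $\pi_2$, then $\pi_3$ contributes factors $2^{k_1-1}$, $2^{k_2-1}$, $2^{k_3-1}$ with the $k_i$ fixed at their values for $T$, giving $2^{7-3}=2^4$.

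The retraction half, however, rests on a false claim. You assert that a $j$-slide ``leaves the orientations of all edges in the other two directions unchanged,'' and your parenthetical justification only covers length-one and length-two paths. In $Q_3$ the path $P$ of Lemma~\ref{3cubeslides.lem} can have length three, and then the cycle $C$ can meet $T_+$ in edges of all three directions. Concretely, take $i=1$ and the tree $T$ with edge set
\[
\{(\emptyset,\{2\}),(\{2\},\{1,2\}),(\{1,2\},\{1\}),(\emptyset,\{3\}),(\{3\},\{2,3\}),(\{2,3\},\{1,2,3\}),(\{1,2,3\},\{1,3\})\}.
\]
The edge $e=(\emptyset,\{3\})$ is $1$-slidable, and sliding it reverses \emph{both} direction-$2$ edges: $(\{3\},\{2,3\})$ goes from downward to upward, and $(\{1,3\},\{1,2,3\})$ from upward to downward. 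What \emph{is} true, and what the paper actually uses, is that an $i$-slide causes no \emph{net} change in the number of upward $j$-edges for $j\neq i$ --- this is immediate from the effect on $x^{\dd{T}}$ noted at the start of Section~\ref{orientations.sec}. That weaker fact is all you need: if $T\in\downward{j}{Q_3}$ has zero upward $j$-edges, then zero net change forces zero upward $j$-edges afterwards (nothing can flip up-to-down, so nothing flips down-to-up), giving $\pi_i(\downward{j}{Q_3})\subseteq\downward{j}{Q_3}$ and $\pi_i^{-1}(\downward{j}{Q_3})\subseteq\downward{j}{Q_3}$ for $j\neq i$. With this correction your argument goes through and matches the paper's.
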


\begin{figure}
\begin{center}
\psfrag{empty}{$\emptyset$}
\psfrag{1}{$\{1\}$}
\psfrag{2}{$\{2\}$}
\psfrag{3}{$\{3\}$}
\psfrag{12}{$\{1,2\}$}
\psfrag{23}{$\{2,3\}$}
\psfrag{13}{$\{1,3\}$}
\psfrag{123}{$\{1,2,3\}$}
\psfrag{p1}{$\pi_1$}
\psfrag{p2}{$\pi_2$}
\psfrag{p3}{$\pi_3$}
\includegraphics[scale=0.62]{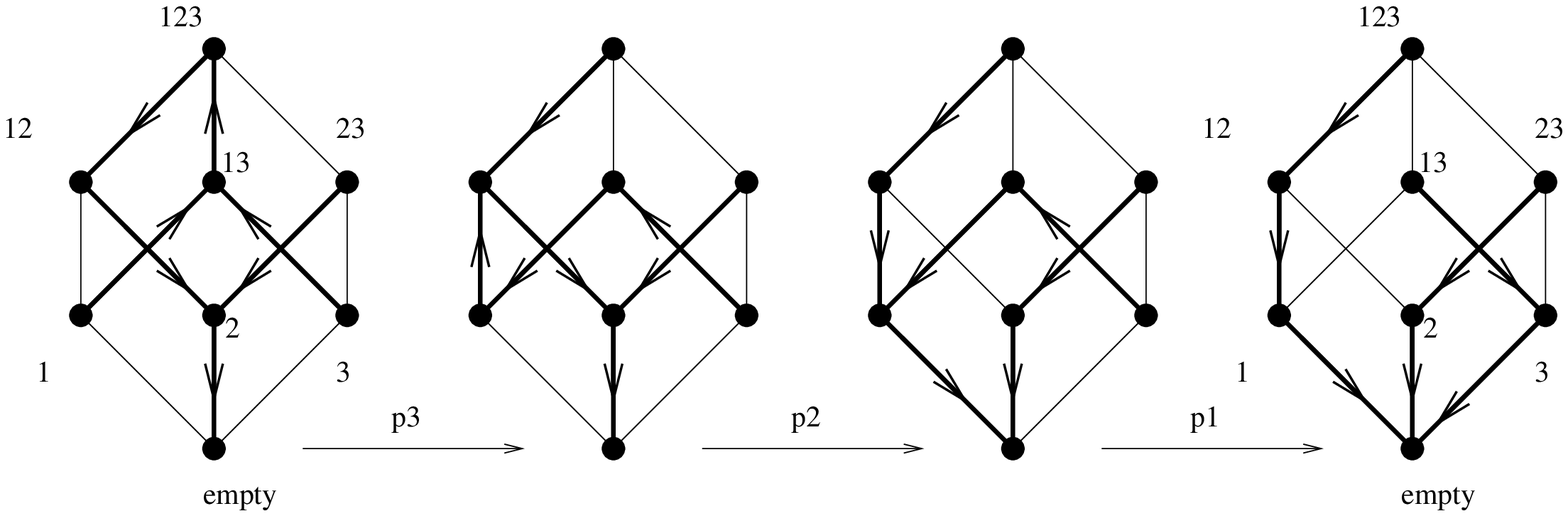}
\caption{An example of the retraction $\pi=\pi_1\circ\pi_2\circ\pi_3$.
By successively carrying out all possible downward edges slides in
directions 3, 2, 1 in turn we arrive at the upright tree on the right.}
\label{retraction.fig}
\end{center}
\end{figure}

\begin{proof}
Recall from Section~\ref{orientations.sec} that an $i$-slide has no
net effect on the number of upward edges in directions other than $i$.
This implies that 
\begin{equation}
\label{nestingretract.eq}
\pi_i(\downward{j}{Q_3})\subseteq\downward{j}{Q_3}
\quad \text{for all $i$ and $j$,}
\end{equation}
 and that
\begin{equation}
\label{pullback.eq}
\pi_i^{-1}(\downward{j}{Q_3})\subseteq\downward{j}{Q_3}
\quad\text{ for $j\neq i$}.
\end{equation}
The relation
\[
\pi(\tree{Q_3})=\pi_1\circ\pi_2\circ\pi_3(\tree{Q_3})
       \subseteq\downward{1}{Q_3}\cap\downward{2}{Q_3}\cap\downward{3}{Q_3}
              = \utree{Q_3}
\]
now follows immediately from equation~\eqref{nestingretract.eq}
and the fact that $\pi_i(\tree{Q_3})=\downward{i}{Q_3}$. Moreover if
$T$ is upright then $\pi_i(T)=T$ for all $i$, so $\pi(T)=T$ and $\pi$
is a retraction.

Suppose now that $T$ is an upright tree with $k_i$ edges in direction
$i$ for $i=1,2,3$. Then the preimage of $T$ under $\pi_1$ is
$\islide[1]{T}$, which consists of $2^{k_1-1}$ trees, each with the
same number of edges in each direction as $T$, and each lying in
$\downward{2}{Q_3}\cap\downward{3}{Q_3}$, by
equation~\eqref{pullback.eq}. The preimage of each tree
$T_\eps\in\islide[1]{T}$ under $\pi_2$ is then $\islide[2]{T_\eps}$,
which consists of
  $2^{k_2-1}$ spanning trees, each of which must belong to
  $\downward{3}{Q_3}$ and have $k_3$ edges in direction $3$, and
  pulling each such tree $(T_\eps)_\delta$ back under $\pi_3$ we get 
$\islide[3]{(T_\eps)_\delta}$, which consists of
$2^{k_3-1}$ spanning
  trees. We therefore get a total of
\[
2^{k_1-1}\times2^{k_2-1}\times2^{k_3-1}= 2^{k_1+k_2+k_3-3}=2^4
\]
trees in $\pi^{-1}(T)$. 
\end{proof}

\subsection{The number of upright trees}
\label{upright.sec}

Given an upright spanning tree $T$, the first edge on the path from a
non-root vertex $S$ to the root must be in a direction $\phi_T(S)=i$
belonging to $S$. To each upright tree we may therefore associate a
function $\phi_T\co\power{[3]}\setminus\{\emptyset\}\to[3]$ such that
$\phi_T(S)\in S$ for all $S$. We will say that such a function is a
\emph{section} of $\powernon{3}=\power{[3]}\setminus\{\emptyset\}$.

Conversely, given a section $\phi$ of $\powernon{3}$, let $T(\phi)$ be
the subgraph of $Q_3$ containing all eight vertices and the seven edges
\[
\Bigl\{\bigl\{S,S\setminus\{\phi(S)\}\bigr\}:S\in\powernon{3}\Bigr\}.
\]
At each vertex $S$ of $Q_3$ other than $\emptyset$ there is a unique
edge in $T(\phi)$ connecting $S$ to a vertex of cardinality $|S|-1$,
and following such edges one obtains a path in $T(\phi)$ from $S$ to
the root $\emptyset$.  It follows that $T(\phi)$ is connected, and
since it has seven edges and includes all eight vertices it must be a
spanning tree. Moreover, these paths show that $T(\phi)$ is upright,
and it is easily seen that it has associated section $\phi$.

This all proves the following lemma, and then Corollary~\ref{count.cor}
follows immediately from Lemmas~\ref{preimage.lem} and~\ref{uprightcount.lem}.

\begin{lemma}
\label{uprightcount.lem}
The upright spanning trees of $Q_3$ are in bijection with the sections
of $\powernon{3}$. Consequently there are $2^3\times 3=24$ upright trees.
\end{lemma}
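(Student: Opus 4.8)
The plan is to construct maps in both directions between the set $\utree{Q_3}$ of upright trees and the set of sections of $\powernon{3}$, check that they are mutually inverse, and then enumerate the sections directly. In one direction I would send an upright tree $T$ to the function $\phi_T$ already described: for a non-root vertex $S$, uprightness forces the first edge on the path from $S$ to the root $\emptyset$ to run from $S$ down to a vertex of smaller cardinality, hence to $S\setminus\{i\}$ for a unique $i\in S$, and we set $\phi_T(S)=i$; this is visibly a section. In the other direction I would send a section $\phi$ to the subgraph $T(\phi)$ of $Q_3$ with all eight vertices and the seven edges $\{\{S,S\setminus\{\phi(S)\}\}:S\in\powernon{3}\}$.

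The first substantive step is to confirm that $T(\phi)$ is a spanning tree. Since it has eight vertices and seven edges, it suffices to show it is connected, and this follows because starting from any $S\neq\emptyset$ and repeatedly replacing the current vertex $R$ by $R\setminus\{\phi(R)\}$ strictly lowers the cardinality at each step, so the process terminates at $\emptyset$ and traces out a path in $T(\phi)$ joining $S$ to the root. A connected graph on eight vertices with seven edges is a tree, and the descending paths just exhibited show that on the way to the root every edge is crossed toward the smaller endpoint, so $T(\phi)$ is upright and lies in $\utree{Q_3}$.

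Next I would check that the two constructions invert one another. For a section $\phi$, the unique edge of $T(\phi)$ at a vertex $S\neq\emptyset$ pointing toward the root is $\{S,S\setminus\{\phi(S)\}\}$, so $\phi_{T(\phi)}=\phi$. Conversely, for $T\in\utree{Q_3}$ the edge $\{S,S\setminus\{\phi_T(S)\}\}$ belongs to $T$ by the very definition of $\phi_T$, for every $S\neq\emptyset$; thus $T(\phi_T)$ is a subgraph of $T$ with seven edges, and since $T$ also has seven edges, $T(\phi_T)=T$. This establishes the bijection. Finally, $\powernon{3}$ consists of three singletons, three two-element sets, and $[3]$; a section has no freedom at a singleton, two choices at each two-element set, and three at $[3]$, for a total of $2^3\times3=24$ sections, hence $24$ upright trees.

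The only step that calls for any care is verifying that $T(\phi)$ is genuinely a tree, and even that reduces to the simple observation that following the edges prescribed by $\phi$ strictly decreases cardinality and therefore reaches $\emptyset$; the remaining verifications are routine bookkeeping.
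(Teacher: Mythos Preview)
Your proof is correct and follows essentially the same route as the paper's: both construct $\phi_T$ and $T(\phi)$ identically, verify that $T(\phi)$ is a connected (hence spanning) upright tree via the cardinality-decreasing paths to $\emptyset$, and count the sections directly. If anything, you are slightly more explicit in checking both compositions $\phi_{T(\phi)}=\phi$ and $T(\phi_T)=T$, whereas the paper records only the first and leaves the second implicit.
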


\begin{corollary}
\label{count.cor}
The $3$-cube has $2^4\cdot2^3\cdot 3=384$ spanning trees. 
\end{corollary}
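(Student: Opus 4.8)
The plan is to deduce Corollary~\ref{count.cor} directly by combining the two structural results that have already been established. First I would invoke Lemma~\ref{uprightcount.lem}, which puts the upright spanning trees of $Q_3$ in bijection with the sections $\phi$ of $\powernon{3}$; since each of the three nonempty subsets of size $1$ has only itself available, the three $2$-element subsets each offer two choices, and the full set $[3]$ offers three, there are $1^3\cdot 2^3\cdot 3 = 24 = 2^3\cdot 3$ upright trees. Next I would invoke Lemma~\ref{preimage.lem}, which says $\pi=\pi_1\circ\pi_2\circ\pi_3$ is a retraction of $\tree{Q_3}$ onto $\utree{Q_3}$ whose fibre over each upright tree has size exactly $2^4$.

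The key step is then just the fibre-counting principle: because $\pi$ is a well-defined function with codomain $\utree{Q_3}$, the sets $\pi^{-1}(T)$ for $T\in\utree{Q_3}$ partition $\tree{Q_3}$, so
\[
|\tree{Q_3}| = \sum_{T\in\utree{Q_3}} |\pi^{-1}(T)| = |\utree{Q_3}|\cdot 2^4 = (2^3\cdot 3)\cdot 2^4 = 2^4\cdot 2^3\cdot 3 = 384.
\]
I would remark that this matches the value $\prod_{S\subseteq[3],\,|S|\geq 2} 2|S| = (2\cdot2)^3\cdot(2\cdot3) = 2^4\cdot 2^3\cdot 3$ predicted by~\eqref{noftrees.eq}, and that the whole construction is weight-aware: tracking how an $i$-slide multiplies $x^{\dd{T}}$ by $x_i^{\pm 2}$ (Section~\ref{orientations.sec}) while leaving $q^{\dir{T}}$ unchanged, together with the bijection of Lemma~\ref{uprightcount.lem}, yields the weight-preserving bijection with signed sections of $\ptwo{3}$ and hence the $n=3$ case of~\eqref{weightedcount.eq}.

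There is essentially no obstacle remaining at this stage: all the substantive work — the existence and uniqueness of $i$-slidable edges (Lemmas~\ref{3cubeslides.lem} and~\ref{i-slidecount.lem}), the independence of parallel slides (Lemma~\ref{3cubeindependence.lem}), the fact that the fibres of $\pi_i$ have size $2^{k_i-1}$ and nest correctly under the other retractions (equations~\eqref{nestingretract.eq} and~\eqref{pullback.eq}), and the count of upright trees — has already been done. The only point requiring the slightest care is that the exponents multiply to exactly $2^4$ rather than merely being bounded by it: this uses that an upright tree of $Q_3$ has $k_1+k_2+k_3 = 7$ edges total with $k_i\geq 1$ in each direction, so $k_1+k_2+k_3-3 = 4$, which is exactly the computation carried out in the proof of Lemma~\ref{preimage.lem}. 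Thus the corollary is immediate.
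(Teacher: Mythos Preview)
Your proposal is correct and follows exactly the paper's own approach: the corollary is stated to follow immediately from Lemmas~\ref{preimage.lem} and~\ref{uprightcount.lem}, and you combine them via the fibre-counting principle just as intended. The extra remarks about the weighted count and the computation $k_1+k_2+k_3-3=4$ are accurate elaborations but not needed for the corollary itself.
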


\subsection{A bijective count}
\label{bijection.sec}

With some additional bookkeeping we may establish a bijection $\Phi$
between $\tree{Q_3}$ and the set of \emph{signed sections} of
$\ptwo{3} = \{S\in\power{[3]}:|S|\geq 2\}$. These are functions
$\phi=\phi_d\times\phi_s\co\ptwo{3}\to[3]\times\{\pm1\}$ such that
$\phi_d(S)\in S$ for all $S$. Given such a function $\phi$ we may
define its weight to be
\[
q^{\dir{\phi}}x^{\sgn{\phi}}
  = q_1q_2q_3\prod_{S\in\ptwo{3}} q_{\phi_d(S)}x_{\phi_d(S)}^{\phi_s(S)},
\]
and the bijection will be weight preserving in the sense that
$q^{\dir{T}}x^{\dd{T}}$ will equal the weight of the associated 
section $\phi$. This gives a bijective proof of the $n=3$ case
of the Martin-Reiner formula~\eqref{weightedcount.eq}.

Given a spanning tree $T$ of $Q_3$, the upright tree $\pi(T)$ has a
canonical associated section $\phi_{\pi(T)}$ of
$\powernon{3}$. Restricting $\phi_{\pi(T)}$ to the sets of size two or
more gives an (unsigned) section $\phi$ of $\ptwo{3}$, and this restriction
completely determines $\phi_{\pi(T)}$ and hence $\pi(T)$. 
Moreover $q^{\dir{T}}=q^{\dir{\pi(T)}}=q^{\dir{\phi}}$. We may
therefore define $\Phi=\Phi_d\times\Phi_s$ so that 
\[
\Phi_d(T) = \phi_{\pi(T)}\Bigr|_{\ptwo{3}}.
\]
It remains to define the signs, and we will do this by studying the
way in which the edge slides taking $T$ to $\pi(T)$ affect the
orientations of the edges.

In Section~\ref{orientations.sec} we saw that sliding an edge $e$ of
$T$ in direction $i$ reverses the orientation of exactly one edge in
direction $i$. We claim that the reversed $i$-edge is the same for
all trees $T_\eps\in\islide{T}$:

\begin{lemma}
\label{reversededge.lem}
Let $e$ be an $i$-slidable edge of a spanning tree $T$ of $Q_3$, and let
$f$ be the $i$-edge of $T$ whose orientation is reversed by sliding $e$.
Then, for any tree $T_\eps\in\islide{T}$, $f$ is also the $i$-edge
reversed by sliding whichever
of $e,\sigma_i(e)$ belongs to $T_\eps$.
\end{lemma}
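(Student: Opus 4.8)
The plan is to reduce the lemma to the explicit local picture established in the proof of Lemma~\ref{3cubeslides.lem} and recalled in Section~\ref{orientations.sec}. Recall that if $e$ is $i$-slidable then $e$ lies on a path $P$ joining two $i$-edges $e_1,e_2$ of $T$ that meets no other $i$-edge, and the cycle $C$ created by adding $\sigma_i(e)$ to $T$ consists of $e$, $e_1$, $e_2$, together with the remaining edges of $P$ and of $\sigma_i(P)$. The discussion in Section~\ref{orientations.sec} identifies the reversed $i$-edge $f$ as whichever of $e_1,e_2$ lies in the component $T_+$ of $T-e$ not containing the root; equivalently, $f$ is whichever of $e_1,e_2$ the oriented edge $e$ points \emph{away} from. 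So the content of the lemma is that this combinatorial data — the pair $\{e_1,e_2\}$ and which of them is ``downstream'' of $e$ — is unchanged when we replace $T$ by an arbitrary $T_\eps\in\islide{T}$ and $e$ by whichever of $e,\sigma_i(e)$ lies in $T_\eps$.

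First I would observe that the pair $\{e_1,e_2\}$ is intrinsic: by Lemma~\ref{3cubeslides.lem}, whichever of $e,\sigma_i(e)$ lies in $T_\eps$ is the unique $i$-slidable edge on the path joining $e_1$ and $e_2$ in $T_\eps$, and (as in the proof of Lemma~\ref{3cubeindependence.lem}) the minimal cycle it creates is again $C$, independent of how the other slidable edges of $T$ are slid. Next I would pin down the orientation of $e$ (or $\sigma_i(e)$) in $T_\eps$: the root lies in the component of $T_\eps \setminus\{e,\sigma_i(e)\}$ containing $\emptyset$, and since sliding the \emph{other} edges of $S_i(T)$ only alters edges lying in their own (disjoint, within a single square face) minimal cycles — none of which contains $e$, $\sigma_i(e)$, $e_1$ or $e_2$ — the partition of $T_\eps-e$ (or $T_\eps-\sigma_i(e)$) into a root component $T_-$ and a non-root component $T_+$ assigns $e_1$ and $e_2$ to the same sides as for $T$. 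The one point requiring a line of care is that $e$ and $\sigma_i(e)$ occupy ``opposite ends'' of $C$ relative to $e_1,e_2$, so that whichever of them lies in $T_\eps$ still points away from the same one of $e_1,e_2$; this follows because $\sigma_i$ maps $P$ to $\sigma_i(P)$ fixing the endpoints $e_1,e_2$, so $e$ and $\sigma_i(e)$ are separated in $C$ by exactly $e_1$ and $e_2$, hence for either choice the downstream $i$-edge is the same.

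I expect the main obstacle to be a purely bookkeeping one: making precise that sliding the other edges of $S_i(T)$ does not disturb the root/non-root split relevant to $e$. The cleanest way around it is to note that, by Lemma~\ref{3cubeindependence.lem} and the locality observed in its proof, each such slide changes $T_\eps$ only inside a minimal $4$-cycle contained in a square face of $Q_3$, and these cycles are pairwise edge-disjoint and disjoint from $\{e,\sigma_i(e),e_1,e_2\}$; hence the connectivity of $T_\eps-e$ outside these cycles — in particular which side $e_1$, $e_2$, and the root fall on — is the same as for $T-e$. Once that is said, the identification of $f$ via ``the $i$-edge that $e$ points away from'' from Section~\ref{orientations.sec} applies verbatim to $T_\eps$ and yields the same $f$, completing the proof.
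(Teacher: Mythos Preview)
Your approach is essentially the paper's: reduce to the characterisation ``$f$ is whichever of $e_1,e_2$ the edge $e$ points away from'' and then argue that sliding the other $i$-slidable edges leaves the orientation of $e$ (or $\sigma_i(e)$) unchanged. The paper's execution is more direct than yours, though: rather than tracking the root/non-root partition by hand, it simply observes that neither $e$ nor $\sigma_i(e)$ lies on the cycle $C_{e'}$ for any other $e'\in S_i(T)$, and then invokes Lemma~\ref{orientations.lem} (only edges in $C_{e'}\cap T_+$ are reversed, and $\sigma_i(e)$ inherits the orientation of $e$) to conclude immediately.

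One point to correct: your claim that the cycles $C_{e'}$ of the other slidable edges avoid $e_1$ and $e_2$ is false in general --- when $k_i\geq 3$ adjacent cycles share an $i$-edge --- but it is also unnecessary. All you need is $e,\sigma_i(e)\notin C_{e'}$, which does hold (if $e\in P'$ it would contradict the uniqueness in Lemma~\ref{3cubeslides.lem}, and $\sigma_i(e)\notin T$ rules out $\sigma_i(e)\in P'$; reflecting handles $\sigma_i(P')$). Dropping the $e_1,e_2$ claim and appealing directly to Lemma~\ref{orientations.lem} both fixes the gap and shortens the argument to match the paper's.
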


\begin{proof}
Let $C_e$ be the cycle formed by adding $\sigma_i(e)$ to
$T$. Then as noted at the end of Section~\ref{orientations.sec}, $C_e$
consists of two $i$-edges $e_1$ and $e_2$, and the path $P$ joining
them in $T$ together with $\sigma_i(P)$, and the edge $f$ 
reversed by sliding $e$ is whichever of $e_1$ and $e_2$ that $e$ points away
from. Neither $e$ nor $\sigma_i(e)$ lies on the corresponding cycle
$C_{e'}$ for any other $i$-slidable edge $e'\in S_i(T)$, and therefore
the orientation of $e$ or $\sigma_i(e)$ cannot be affected by sliding
$e'$ or $\sigma_i(e')$, by Lemma~\ref{orientations.lem}. It follows
that $e$ and $\sigma_i(e)$ always point away from the same edge
$e_1$ or $e_2$ regardless of how the other edges in $S_i(T)$ are 
slid, proving the lemma.
\end{proof} 

To define $\Phi_s(T)$ at vertices where $\Phi_d(T)(S)=i$ we
write $\pi=\pi_1\circ\pi_2\circ\pi_3$ in the form
$\alpha\circ\pi_i\circ\beta$, where $\beta$ is the composition of
the retractions preceding $\pi_i$, and $\alpha$ the composition 
of those that follow. (Thus for example when $i=3$, $\beta$ is
the identity and $\alpha=\pi_1\circ\pi_2$.) The tree 
$\pi_i\circ\beta(T)$ is obtained from $\beta(T)$ by carrying
out all possible downward $i$-slides, and we partition the 
$i$-edges of $\beta(T)$ into three sets $P_i$, $N_i$ and $Z_i$:
\begin{itemize}
\item
$P_i$ consists of the $i$-edges of $\beta(T)$ that may be reversed by
downward $i$-slides in $\beta(T)$ (and hence the $i$-edges that are
reversed in obtaining $\pi_i\circ\beta(T)$ from $\beta(T)$);
\item
$N_i$ consists of the $i$-edges of $\beta(T)$ that may be reversed
by upward $i$-slides in $\beta(T)$; and
\item
$Z_i$ consists of the unique $i$-edge of $\beta(T)$ that may not
be reversed by an $i$-slide. This is the $i$-edge that belongs to the
component containing the root when the $i$-slidable edges 
of $\beta(T)$ are deleted.
\end{itemize}
We note that this partition uniquely determines
$\beta(T)$ from $\pi_i\circ\beta(T)$, by Lemma~\ref{reversededge.lem}.

We now keep track of this partition as $\pi_i\circ\beta(T)$ is
transformed into $\pi(T)$ by $\alpha$. During this transformation
the $i$-edges may be slid in directions $j\neq i$, but we keep
track of them through this movement to obtain a corresponding
partition $\{P_i,N_i,Z_i\}$ of the $i$-edges of $\pi(T)$. 
There are now two possibilities:
\begin{enumerate}
\item
$Z_i$ consists of the edge $\{\emptyset,\{i\}\}$. In this case
we simply set $\Phi(T)(S) = (i,+1)$ if the first edge on the path
in $\pi(T)$ to the root is an $i$-edge belonging to $P_i$, and
$\Phi(T)(S) = (i,-1)$ if the first edge on the path
in $\pi(T)$ to the root is an $i$-edge belonging to $N_i$. 
\item
\label{swap.case}
$Z_i=\{e\}$, for some $i$-edge $e\neq\{\emptyset,\{i\}\}$. In this
case we modify the partition by swapping $e$ and $\{\emptyset,\{i\}\}$,
and then assign signs as in the previous case.
\end{enumerate}

\begin{example}
We determine the signs associated with direction 3 for the tree
appearing on Figure~\ref{retraction.fig}. In the leftmost tree the
3-slidable edges are $\{\{2\},\{1,2\}\}$, which may be slid up to
reverse the orientation of the $3$-edge $\{\{1,2\},\{1,2,3\}\}$, and
$\{\{1,3\},\{1,2,3\}\}$, which may be slid down to reverse
$\{\{1\},\{1,3\}\}$. So $P_3$ contains the edge $\{\{1\},\{1,3\}\}$
only, $N_3$ contains the edge $\{\{1,2\},\{1,2,3\}\}$ only, and $Z_3$
contains the remaining $3$-edge $\{\{2\},\{2,3\}\}$, which is not
reversed by either $3$-slide. Under the $2$- and $1$-slides the edge
$\{\{1\},\{1,3\}\}$ belonging to $P_3$ is moved to
$\{\emptyset,\{3\}\}$, putting us in case~\eqref{swap.case} above.
The $3$-edge belonging to $Z_3$ is still $\{\{2\},\{2,3\}\}$, so
we assign the plus sign to the vertex $\{2,3\}$. The associated signed
section in full is
\begin{align*}
\Phi(\{1,2\}) &= (2,+1), & \Phi(\{1,3\}) &= (1,+1) \\
\Phi(\{2,3\}) &= (3,+1), & \Phi(\{1,2,3\}) &= (3,-1).
\end{align*}
\end{example}

It is clear by construction that the resulting map $\Phi$ is weight
preserving, so it remains to check that $T$ is in fact determined by
the associated signed section $\Phi(T)$. The upright tree $\pi(T)$ may
be recovered from $\Phi_d(T)$, and we partition the $i$-edges of
$\pi(T)$ as $\{P_i,N_i,Z_i\}$ according to the signs.  We may then
carry out slides in directions 1, 2 and 3 in turn so as to reverse the
orientations of the edges belonging to $P_i$ for each $i$,
keeping track of each partition as we do so. The only
difficulty that can arise is if at the $i$th stage the $i$-edge that
cannot be reversed belongs to $P_i$ (this can only occur for $i\geq
2$). In that case we reverse the edge belonging to $Z_i$ in its
place, which has the effect of undoing the modification made to the
partition in case~\eqref{swap.case} above.

\subsection{The edge slide graph of the three cube}
\label{edgeslidegraph.sec}

We define the \emph{edge slide graph of} $Q_3$ to be the 
graph \eslide{3}\ with vertices the spanning trees of $Q_3$, and an
edge between trees $T_1$ and $T_2$ if they are related by a single
edge slide. The edge slide graph is a subgraph of the 
tree graph~\cite[Sec.~7.4]{ozeki-yamashita2011}
of $Q_3$, which has an edge between $T_1$ and $T_2$
if $|E(T_1)\setminus E(T_2)|=1$.
We conclude this section by describing the
connected components of \eslide{3}.

Edge slides do not change the direction monomial
$q^{\dir{T}}=q_1^{k_1}q_2^{k_2}q_3^{k_3}$, so if 
$q^{\dir{T_1}}\neq q^{\dir{T_2}}$ then $T_1$ and $T_2$ lie in different
components. Thus, it suffices to understand the subgraphs 
$\eslidesig{k_1,k_2,k_3}$ consisting of the trees with direction
monomial $q_1^{k_1}q_2^{k_2}q_3^{k_3}$. Moreover, any permutation of
$\{1,2,3\}$ induces an automorphism of $Q_3$, and hence of \eslide{3}, so
we may consider the triple $(k_1,k_2,k_3)$ up to permutation.
We will refer to this triple as the \emph{signature} of $T$, and up
to permutation we find that there are three possible signatures, namely
$(4,2,1)$, $(3,3,1)$ and $(3,2,2)$. 

A spanning tree with $k_3=1$ consists of two spanning trees of $Q_2$,
lying in $\Fm{3}$ and $\Fp{3}$, joined by an edge in direction 3. No
edge slide in direction 3 is possible, while the spanning trees of
$Q_2$ in $\Fm{3}$ and $\Fp{3}$ each have a single possible edge slide,
and the edge joining them may be slid in either direction
1 or 2. It is easily seen that these four slides may be made
independently, so each such tree belongs to a component of
$\eslide{3}$ isomorphic to $Q_4$. We get one such component for
signature $(4,2,1)$, and two for $(3,3,1)$, characterised by the
signature $(k_1',k_2')$ of the spanning tree of $\Fm{3}$, which
is invariant under edge slides in directions 1 and 2.

The subgraph \eslidesig{3,2,2}\ is more interesting, as now
edge slides in all three directions are possible. We find that there
are four possible upright trees with signature $(3,2,2)$, occurring
in two mirror image pairs, and that any two of these may be 
connected by a series of edge slides. Since any spanning tree
may also be connected to an upright tree by a series of edge slides
this implies that \eslidesig{3,2,2}\ is connected. This gives us
a connected component with 64 vertices, consisting of the 
$16\times 4$ spanning trees associated with these four upright 
trees by $\pi$. 

In total \eslide{3}\ has $6\times1+3\times2=12$ components isomorphic
to $Q_4$, and three 64-vertex components isomorphic to \eslidesig{3,2,2},
accounting for all $24\times 16=384$ spanning trees of $Q_3$.  
The structure of the 64-vertex component has been found by Lyndal
Henden~\cite{henden2011} as an undergraduate summer research project.

\section{Edge slides in higher dimensions}
\label{higherdimensions.sec}

The definition of an edge slide in Section~\ref{existence3.sec} was
stated only for a spanning tree of the three-cube, but it applies just
as well to a spanning tree of $Q_n$ for any $n\geq 2$. In this section
we prove that a spanning tree with $k_i$ edges in direction $i$ always
has at least $k_i-1$ edges that may be slid in direction $i$, but show
by example that the methods of this paper do not readily extend to
count the spanning trees of $Q_n$ for $n\geq 4$.

\subsection{Existence}
\label{existence-n.sec}

As in the three-dimensional case, an $i$-slidable edge must lie on the
path between two edges in direction~$i$. We prove the following 
existence theorem, and deduce three corollaries.

\begin{theorem}
\label{ncubeslides.th}
Let $T$ be a spanning tree of $Q_n$, and let $e_1$ and $e_2$ be
edges of $T$ in direction $i$. Then there is an $i$-slidable edge on
the path from $e_1$ to $e_2$ in $T$. 
\end{theorem}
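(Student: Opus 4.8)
The plan is to mimic the three-dimensional argument but replace the case analysis with a homological/counting argument that works in all dimensions.

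The plan is to reduce the statement to a one-dimensional combinatorial fact about the path $P$. First I would reduce to the case that $P$ contains no edge of $T$ in direction $i$ at all. If $P$ does contain such an edge $e_3$, then, since two distinct edges in direction $i$ share no vertex, $e_3$ differs from both $e_1$ and $e_2$, and the path in $T$ from $e_1$ to $e_3$ is a proper sub-path of $P$ that contains strictly fewer edges in direction $i$; so induction on the number of $i$-edges of $T$ lying on the path between the two $i$-edges lets us assume $P$ contains none. Consecutive vertices of $P$ then lie in the same face, so $P=(v_0,v_1,\ldots,v_m)$ lies entirely in one face, which after applying $\sigma_i$ we may take to be $\Fm{i}$. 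Here $v_0$ is the endpoint of $e_1$ lying on $P$ and $v_m$ that of $e_2$, the other endpoints of $e_1$ and $e_2$ are $\sigma_i(v_0)$ and $\sigma_i(v_m)$, and $m\geq 1$ since $e_1\neq e_2$.

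Next I would record a clean slidability criterion for the edges $e_k=(v_k,v_{k+1})$ of $P$. Sliding $e_k$ in direction $i$ deletes $e_k$ and inserts $\sigma_i(e_k)$, whose endpoints are $\sigma_i(v_k)$ and $\sigma_i(v_{k+1})$; the result is again a spanning tree precisely when these two vertices lie in different components of $T-e_k$ (which in particular forces $\sigma_i(e_k)\notin E(T)$), equivalently when $e_k$ lies on the path in $T$ joining $\sigma_i(v_k)$ to $\sigma_i(v_{k+1})$. To make this concrete, delete all $m$ edges of $P$ from $T$, leaving components $C_0,\ldots,C_m$ with $v_\ell\in C_\ell$, and define $g(j)\in\{0,\ldots,m\}$ by $\sigma_i(v_j)\in C_{g(j)}$. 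Since $e_1$ and $e_2$ survive this deletion, $g(0)=0$ and $g(m)=m$; and restoring all edges of $P$ except $e_k$ shows that $T-e_k$ has components $C_0\cup\cdots\cup C_k$ and $C_{k+1}\cup\cdots\cup C_m$. Hence $e_k$ is $i$-slidable if and only if exactly one of $g(k)$, $g(k+1)$ lies in $\{0,1,\ldots,k\}$.

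It then remains to prove the following elementary fact: for any $g\colon\{0,\ldots,m\}\to\{0,\ldots,m\}$ with $g(0)=0$ and $g(m)=m$, some $k\in\{0,\ldots,m-1\}$ has exactly one of $g(k)$, $g(k+1)$ at most $k$. If not, then $g(k)\leq k$ is equivalent to $g(k+1)\leq k$ for every $k$, so, starting from $g(0)=0\leq 0$, a trivial induction gives $g(k)\leq k-1$ for all $k\geq 1$, contradicting $g(m)=m$. Feeding our $g$ into this fact produces an $i$-slidable edge on $P$, which proves the theorem.

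I expect the crux to be the reformulation in the second paragraph: once slidability of an edge of $P$ is expressed purely in terms of which component $\sigma_i(v_j)$ falls into after cutting $P$ out of $T$, the problem collapses to the one-line pigeonhole argument of the third paragraph. By contrast the first-paragraph reduction and the component bookkeeping are routine, though they need a little care with the tree structure.
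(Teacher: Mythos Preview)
Your proof is correct and is essentially the same argument as the paper's. Your function $g$ is a re-indexing of the paper's $\phi$: with $\phi(v_j)$ defined as the first vertex of $P$ met on the path in $T$ from $\sigma_i(v_j)$ to $v_j$, one has $\phi(v_j)=v_{g(j)}$, and your ``exactly one of $g(k),g(k+1)\le k$'' is precisely the paper's ``$\phi(v_\ell)$ and $\phi(v_{\ell+1})$ lie on opposite sides of $f$''; the contradiction you derive from $g(k)\le k-1$ is the same induction the paper runs on $\phi$. The only cosmetic differences are that you cut $P$ out of $T$ and index components, whereas the paper works directly with paths in $T$, and you note (harmlessly, though unnecessarily) that $P$ lies in a single face.
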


\begin{proof}
Without loss of generality we may assume that the path $P$ from
$e_1$ to $e_2$ in $P$ does not meet any other edge of $T$ in direction
$i$. Let $P=(v_0,v_1,\ldots,v_m)$, where the vertices $v_0$ and $v_m$ are 
incident with $e_1$ and $e_2$ respectively. 

For each vertex $v_j$ of $P$ let $\phi(v_j)$ be the first vertex of
$P$ on the path from $\sigma_i(v_j)$ to $v_j$ in $T$. 
Clearly, 
$\phi(v_0)=v_0$, and $\phi(v_m)=v_m$. 
Suppose that there is an edge $f=(v_\ell,v_{\ell+1})$ of $P$ such that
$\phi(v_\ell)$ and $\phi(v_{\ell+1})$ lie on \emph{opposite} sides
of $f$. Then adding $\sigma_i(f)$ to $T$ creates a cycle that is broken
by deleting $f$, so $f$ is $i$-slidable. 

If there is no such edge $f$ then we may show by induction that
we have $\phi(v_j)\in\{v_0,v_1,\ldots,v_{j-1}\}$ for $1\leq j\leq m$. But this
contradicts the fact that $\phi(v_m)=v_m$, so there must be an
$i$-slidable edge on $P$. 
\end{proof}

Using Theorem~\ref{ncubeslides.th} and ideas that have appeared in
Section~\ref{threecubeslides.sec} we may easily deduce the following
corollaries. 

\begin{corollary}
\label{countofslides.cor}
Let $T$ be a spanning tree of $Q_n$, and suppose that $T$ has
$u_i$ upward and $d_i$ downward edges in direction $i$, for a total
of $u_i+d_i=k_i$ edges in direction $i$. Then $T$ has at least
$k_i-1$ edges that may be slid in direction $i$, and of these
at least $u_i$ may be slid downwards, and at least $d_i-1$ may be slid
upwards.
\end{corollary}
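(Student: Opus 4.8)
The plan is to mimic the lower-bound half of the proof of Lemma~\ref{i-slidecount.lem}, replacing each appeal to Lemma~\ref{3cubeslides.lem} by an appeal to Theorem~\ref{ncubeslides.th}, and observing that the topological counting argument from the proof of Lemma~\ref{ddalt.lem} goes through verbatim in dimension~$n$. The uniqueness clause of Lemma~\ref{3cubeslides.lem} is exactly what fails for $n\geq 4$, so we will get only the inequalities, not equalities; fortunately the corollary only asks for inequalities.

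First I would show that the $i$-slidable edges of $T$ must totally disconnect the $i$-edges of $T$. If two $i$-edges $e_1$, $e_2$ remained in the same component after deleting all $i$-slidable edges, then the path between them in $T$ could be chosen to meet no other $i$-edge, and by Theorem~\ref{ncubeslides.th} it would contain an $i$-slidable edge --- contradiction. Since there are $k_i$ edges in direction~$i$, totally disconnecting them requires at least $k_i-1$ edges, giving the first bound. Next, to bound the downward and upward slides separately, delete all $i$-edges of $T$; as in the proof of Lemma~\ref{ddalt.lem} this splits $T$ into $k_i+1$ components, $d_i$ of which lie in $\Fp{i}$ and $u_i+1$ in $\Fm{i}$ (the argument there is purely homological and dimension-independent, noting each upstairs component is adjacent to a unique downward $i$-edge). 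Deleting the downward-slidable $i$-edges of $T$ in addition must further disconnect the $d_i$ upstairs components into at least $k_i$ pieces --- since the $i$-slidable edges together totally disconnect the $i$-edges and the upward-slidable ones all lie downstairs --- requiring at least $k_i-d_i=u_i$ downward-slidable edges; symmetrically, deleting the upward-slidable $i$-edges must split the $u_i+1$ downstairs components into at least $k_i$ pieces, giving at least $k_i-(u_i+1)=d_i-1$ upward-slidable edges.

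The one point that needs a little care --- and which I'd flag as the only real obstacle --- is the claim that sliding an $i$-slidable edge upward or downward really does move it between $\Fm{i}$ and $\Fp{i}$ in the expected direction, so that "downward-slidable $i$-edges" sit upstairs and "upward-slidable" ones sit downstairs; this is immediate from the definition of an upward/downward slide in Section~\ref{existence3.sec} and does not use $n=3$. One should also check that the upper and lower faces of $Q_n$ are each connected and that the automorphism $\sigma_i(S)=S\symd\{i\}$ still exchanges them, which is clear. With these observations the argument is complete.
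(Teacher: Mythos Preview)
Your proposal is correct and follows exactly the route the paper intends: the paper does not spell out a proof of this corollary but simply remarks that it follows from Theorem~\ref{ncubeslides.th} together with the ideas of Section~\ref{threecubeslides.sec}, and you have reconstructed precisely the lower-bound half of the proof of Lemma~\ref{i-slidecount.lem} with Theorem~\ref{ncubeslides.th} substituted for Lemma~\ref{3cubeslides.lem}. Your identification of the one delicate point---that the uniqueness clause is what fails in higher dimensions, yielding only inequalities---is also exactly right.
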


\begin{corollary}
A spanning tree of $Q_n$ has at least $2^n-n-1$ possible edge slides.
\end{corollary}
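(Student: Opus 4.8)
The plan is to sum over all directions the lower bound from Corollary~\ref{countofslides.cor}. For a spanning tree $T$ of $Q_n$ with $k_i$ edges in direction $i$, that corollary guarantees at least $k_i-1$ edges slidable in direction $i$. Since an edge in direction $j$ can only be slid in a direction $i\neq j$, and an individual slide is determined by the edge together with the direction in which it is slid, distinct (edge, direction) pairs give distinct edge slides; summing the per-direction bounds therefore counts each possible slide at most once. Hence the total number of possible edge slides is at least $\sum_{i=1}^n (k_i - 1) = \bigl(\sum_{i=1}^n k_i\bigr) - n$.

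The remaining ingredient is that $\sum_{i=1}^n k_i = |E(T)| = 2^n - 1$, since a spanning tree of $Q_n$ has one fewer edge than the $2^n$ vertices of $Q_n$. Substituting gives a lower bound of $2^n - 1 - n = 2^n - n - 1$, as claimed. First I would state these two observations, then combine them in a single line.

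The only point needing a word of care — and the closest thing to an obstacle, though it is minor — is the claim that the per-direction counts do not overlap: one must note that each slide reversibly corresponds to a choice of an edge $e\in E(T)$ together with a direction $i\neq\dir{e}$, so a slide counted toward direction $i$ is never also counted toward direction $i'\neq i$. Given that, the argument is a two-line deduction from Corollary~\ref{countofslides.cor} and the edge count of a spanning tree, and I would present it as such.

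\begin{proof}
By Corollary~\ref{countofslides.cor}, for each $i$ the tree $T$ has at least $k_i-1$ edges that may be slid in direction $i$. An edge slide is specified by an edge $e$ of $T$ together with the direction $i\neq\dir{e}$ in which it is slid, so slides counted for distinct directions are distinct. Hence $T$ has at least $\sum_{i=1}^n(k_i-1)$ possible edge slides. Since $T$ is a spanning tree of $Q_n$ it has $2^n-1$ edges, so $\sum_{i=1}^n k_i = 2^n-1$, and therefore $T$ has at least $(2^n-1)-n = 2^n-n-1$ possible edge slides.
\end{proof}
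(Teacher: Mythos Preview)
Your proof is correct and is essentially the argument the paper has in mind: the paper does not spell out a proof here, but the analogous corollary for $Q_3$ is proved by exactly this summation $\sum_i(k_i-1)=|E(T)|-n$, and the general case is the same with Corollary~\ref{countofslides.cor} supplying the per-direction lower bound. Your remark that distinct directions give distinct slides is the one point worth making explicit, and you have done so.
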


\begin{corollary}
Given a spanning tree $T$ of $Q_n$ there is a sequence of downward
edge slides that transforms $T$ into an upright tree.
\end{corollary}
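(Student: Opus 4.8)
The plan is to deduce this statement as a direct consequence of Corollary~\ref{countofslides.cor} (or equivalently of Theorem~\ref{ncubeslides.th}), by an induction on the total number of upward edges of $T$. Observe that a spanning tree $T$ of $Q_n$ is upright precisely when it has no upward edges at all, i.e.\ when $\sum_i u_i = 0$. If $T$ has at least one upward edge, then $u_i \geq 1$ for some direction $i$, and Corollary~\ref{countofslides.cor} guarantees that $T$ has at least $u_i \geq 1$ edges that may be slid \emph{downwards} in direction $i$. Performing any one such downward $i$-slide produces a spanning tree $T'$.

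The key point is then that a single downward edge slide strictly decreases the total number of upward edges. This follows from the discussion at the start of Section~\ref{orientations.sec}: an $i$-slide changes the number of upward edges in direction~$i$ by exactly $\pm 1$ and has no net effect on the number of upward edges in any other direction; and a \emph{downward} $i$-slide is precisely one for which this change is $-1$ (it moves an edge from $\Fp{i}$ to $\Fm{i}$, decreasing the decoupled degree monomial's exponent of $x_i$, which by Lemma~\ref{ddalt.lem} tracks $u_i - d_i$, so $u_i$ drops by one). Hence $\sum_i u_i(T') = \sum_i u_i(T) - 1$. Since the total number of upward edges is a non-negative integer, iterating this process terminates after finitely many steps, and it can only terminate at a tree with no upward edges, namely an upright tree. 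Concatenating the downward slides performed along the way gives the desired sequence transforming $T$ into an upright tree.

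I would write this up in two short paragraphs: first setting up the induction/termination argument on $\sum_i u_i$, and then invoking Corollary~\ref{countofslides.cor} to supply a downward slide whenever $\sum_i u_i > 0$ together with the orientation bookkeeping from Section~\ref{orientations.sec} to confirm the quantity strictly decreases. No case analysis or computation is needed once those ingredients are cited.

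The only point requiring a moment's care — and the closest thing to an obstacle — is making rigorous the claim that a downward slide lowers $\sum_i u_i$ by exactly one rather than merely not increasing it. This is essentially already asserted in the text (\emph{``the number of upward edges in direction~$i$ must change by $\pm 1$''} for any $i$-slide, and \emph{``upward''} versus \emph{``downward''} is defined by the direction of the move), so the argument reduces to quoting it; alternatively one can cite Lemma~\ref{orientations.lem}, which identifies the single reversed edge and shows its orientation flips in the expected sense. Everything else is a straightforward well-ordering argument.
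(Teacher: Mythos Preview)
Your proposal is correct and is essentially the argument the paper has in mind: the paper gives no explicit proof but states that the corollary follows from Theorem~\ref{ncubeslides.th} together with the ideas of Section~\ref{threecubeslides.sec}, and your use of Corollary~\ref{countofslides.cor} to supply a downward slide whenever $\sum_i u_i>0$, combined with the decoupled-degree-monomial bookkeeping from Section~\ref{orientations.sec} (valid for all $n$ via Lemma~\ref{ddalt.lem}) to see that $\sum_i u_i$ drops by exactly one, is precisely that. The only thing worth making explicit in the write-up is that the ``no net change in other directions'' statement, although phrased for $Q_3$ in Section~\ref{orientations.sec}, follows for general $n$ because an $i$-slide multiplies $x^{\dd{T}}$ by $x_i^{\pm 2}$ and leaves every $k_j$ unchanged.
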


We note further that the argument of
Section~\ref{upright.sec} may be used to show that the upright trees
of $Q_n$ are in bijection with the sections of $\powernon{n}$, so that
$Q_n$ has a total of $\prod_{k=1}^n k^{\binom{n}{k}}$
upright trees.

\subsection{Counterexamples}
\label{counterexamples.sec}

\begin{figure}
\begin{center}
\psfrag{e}{$e$}
\includegraphics[scale=0.55]{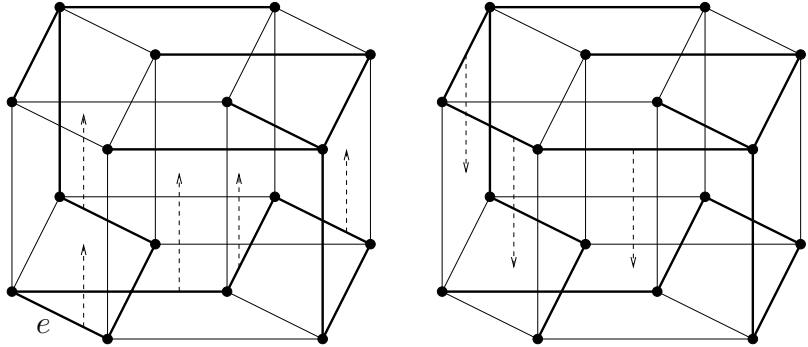}
\caption{``Extra'' edge slides when $n=4$. The tree on the left has
only two vertical edges, but there are five edges (indicated with 
dashed arrows) on the path joining them that may be slid 
vertically. 
The tree on the right shows the result of sliding
the edge labelled $e$ at left. This now has three vertically
slidable edges on the path joining the two vertical edges.}
\label{neq4example.fig}
\end{center}
\end{figure}

Our construction of the retraction $\pi$ from $\tree{Q_3}$ to
$\utree{Q_3}$ depended on the fact that a spanning tree of $Q_3$ with
$k_i$ edges in direction $i$ has precisely $k_i-1$ $i$-slidable edges,
which may all be slid independently (in the sense that sliding any one
of them has no effect on the slidability of the others).  In
Figures~\ref{neq4example.fig} and~\ref{neq5example.fig} we show by
example that this fails for $n\geq 4$. Consequently, the methods of
this paper do not readily extend to count the spanning trees of $Q_n$
for $n\geq 4$.

The tree on the left in Figure~\ref{neq4example.fig} has only two 
vertical edges, but five vertically slidable edges on the 
path joining them. When any one of these five edges is slid vertically 
the other four necessarily cease to be slidable, because the vertical
edges are now disconnected downstairs. However, if the
edge labelled $e$ is slid two vertically slidable edges are created
upstairs, as seen in the tree on the right.

Figure~\ref{neq5example.fig} shows that even when a spanning
tree has precisely $k_i-1$ $i$-slidable edges, the $i$-slides
cannot necessarily be made independently. The tree on the left has
three edges joining the upper and lower $4$-cubes, and precisely 
two edges that may be slid from one $4$-cube to the other. However,
if both are slid the result is not a tree, as seen in the graph
on the right.

\begin{figure}
\begin{center}
\includegraphics[scale=0.5]{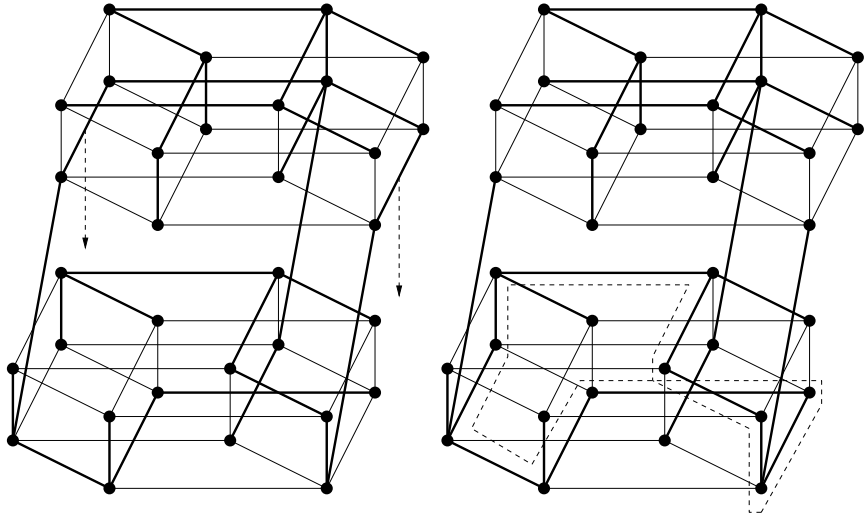}
\caption{Parallel edge slides need not be independent, even when there
are no more edge slides in that direction than expected. The figure
on the left illustrates a spanning tree of the $5$-cube, with some
edges of the $5$-cube omitted for clarity. The tree has three
edges joining the upper and lower $4$-cubes, and precisely 
two edges that may be slid from one $4$-cube to the other.
The figure on the right
shows the result of sliding both. This contains a cycle, indicated by the
dashed line which shadows it.}
\label{neq5example.fig}
\end{center}
\end{figure}

\bibliographystyle{plain}
\bibliography{cubetrees}

\end{document}